\documentclass[psamsfonts]{amsart}

\address{Heng Xie, School of Mathematics, Sun-Yat Sen University, Guangzhou, China}
\email{xieh59@mail.sysu.edu.cn}

\usepackage{array}
\usepackage{booktabs}
\usepackage{multirow}
\usepackage{amssymb}
\usepackage{amsmath}
\usepackage{amsthm}
\usepackage{amsmath,amscd}
\usepackage{dsfont}
\usepackage{float}
\usepackage{mathrsfs}
\usepackage{amsmath,amscd}
\usepackage{calligra,mathrsfs}
\usepackage{MnSymbol}
\usepackage[all,cmtip]{xy}
\usepackage{enumerate}
\usepackage[graphicx]{realboxes}
\usepackage{hyperref}
\theoremstyle{plain}
\newtheorem{theo}{Theorem}[section]
\usepackage{longtable}
\usepackage{lscape}
\usepackage{enumitem}

\theoremstyle{theorem}
\newtheorem{lem}[theo]{Lemma}
\newtheorem{Prop}[theo]{\bf Proposition}

\theoremstyle{definition}
\newtheorem{remark}[theo]{Remark}

\newtheorem{Def}[theo]{Definition}
\newtheorem*{ackno}{Acknowledgement}
\newtheorem{question}[theo]{\bf Question}

\newcommand{\C}{\mathrm{C}}

\newcommand*{\h}{\mathcal{H}\kern -.5pt om}

\newcommand{\SO}{\mathcal{O}}

\newcommand{\UC}{\mathscr{U}}

\newcommand{\R}{\mathbb{R}}
\newcommand{\Z}{\mathbb{Z}}

\newcommand{\id}{\textnormal{id}}

\newcommand{\tr}{\textnormal{Tr}}
\newcommand{\Hom}{\textnormal{Hom}}

\newcommand{\Spec}{\textnormal{Spec\,}}

\newcommand{\Ker}{\textnormal{ker}}

\newcommand{\W}{W}

\title{The Witt ring of the real sphere}

\author{Heng Xie}

\begin{document}
	
	\begin{abstract}
We calculate the Witt ring of the real sphere.
	\end{abstract}
	
	\maketitle

	\raggedbottom
	\section{Introduction}
 
 In the 1970s, 	Knebusch introduced the Witt ring $\W(X)$ of quadratic forms over a scheme $X$ and pointed out that the Witt ring $\W(S^n)$ remained uncomputed (cf. \cite[p.\ 105]{Kne77}), where  
	\[S^n:=  \Spec\, \mathbb{R}[x_0, \ldots, x_n]/(\sum_{i=0}^n x_i^2 -1) \]
	denotes the real sphere. Nearly fifty years have passed, it might be surprising to the non-specialist that, even the group structure of the Witt ring of such an elementary variety has not been fully determined. This is striking since the analogous computations for $K$-theory and Chow groups are well-known. The aim of this article is to solve this problem of computing the ring $\W(S^n)$. 
	
	\begin{theo}\label{thm:main}\label{thm:main}  
		The underlying group of $\W(S^n)$ is computed as follows. 
\begin{table}[htbp]
\Small 
	\centering
	\renewcommand{\arraystretch}{1.2}
	\begin{tabular}{|c|c|c|c|c|c|c|c|c|c|c|}
		\hline
		$n \mod 8$ & $1$ & $2$ & $3$ & $4$ & $5$ & $6$ & $7$ & $8$ \\
		\hline \hline
		$\W(S^n)$ & $\mathbb{Z} \oplus \mathbb{Z}_2$ & $\mathbb{Z} \oplus \mathbb{Z}_2$ & $ \quad \mathbb{Z} \quad $  & $\mathbb{Z} \oplus \mathbb{Z} $ & $ \quad \mathbb{Z} \quad $ & $ \quad \mathbb{Z} \quad $ & $ \quad \mathbb{Z} \quad $ & $\mathbb{Z} \oplus \mathbb{Z}$ \\
		\hline
	\end{tabular}
\end{table}
	\end{theo}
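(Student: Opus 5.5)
One natural route is to compare $S^n$ with the affine space, using the Gysin/localization sequence for Witt groups (Balmer's triangular Witt groups $\W^i$). The sphere contains the closed point-like subvariety given by the hyperplane section $x_0=1$, but a cleaner decomposition works better. The complement $U=S^n\setminus\{x_0=1\}$ is isomorphic to $\mathbb{A}^n_{\mathbb{R}}$ minus nothing real, via stereographic projection; more precisely, it is an open subscheme whose coordinate ring is $\mathbb{R}[y_1,\dots,y_n][1/(1+\sum y_i^2)]$. The closed complement $Z=\{x_0=1\}$ is $\Spec \mathbb{R}[x_1,\dots,x_n]/(\sum x_i^2)$, a non-reduced-free but singular cone. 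To avoid the singularity, I would instead use the real point $p=(1,0,\dots,0)$ and work with the Witt groups of the smooth variety $S^n$ directly via the Gersten--Witt spectral sequence (Balmer--Walter). Its $E_1$ terms are sums of $\W(k(x))$ over points of codimension $p$, and it converges to $\W^{*}(S^n)$. The required inputs are the following.

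First, compute $\W^i(S^n)$ for all $i$ mod $4$ by a localization sequence for $U\subset S^n$. Its complement $D=\{x_0=1\}$ is handled after passing to the smooth pair $(S^n, S^n\cap\{x_0=0\})\cong(S^n,S^{n-1})$. Then $S^n\setminus S^{n-1}$ is the disjoint union of two open hemispheres, each $\mathbb{A}^1$-equivalent on Witt groups to $\mathbb{R}$ (homotopy invariance: $\W^i(\mathbb{A}^n\text{-like})$). This is the delicate point, since the hemisphere $\{x_0\neq 0\}$ is not a disjoint union algebraically. Instead I use $\{x_0\neq 0\}\cap S^n\cong \{x_0^2 = 1-\sum x_i^2 \neq 0\}$, a double cover of $\mathbb{A}^n\setminus Q$ with $Q$ the quadric. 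I would fall back on the known comparison: by a theorem of Knebusch/Jacobson, and for the real points Brumfiel, the signature map $\W(X)\to C(X(\mathbb{R}),\mathbb{Z})$ has $2$-primary torsion kernel and cokernel. Combined with the real-cycle-class comparison (Jacobson: $\W^i(X)[1/2]\cong \bigoplus_j H^{i+4j}(X(\mathbb{R}),\mathbb{Z}[1/2])$), this gives the ranks. $\W^0(S^n)[1/2]$ picks up $H^0(S^n)$ always, and $H^n(S^n)$ exactly when $n\equiv 0 \pmod 4$. This matches the $\mathbb{Z}\oplus\mathbb{Z}$ in the columns $4$ and $8$ of Theorem~\ref{thm:main}.

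Second, determine the $2$-torsion. I would use the Karoubi periodicity and the fundamental sequence $K\to GW\to W$ together with topological comparison. The key is the forgetful/hyperbolic relation with algebraic $K$-theory: $\widetilde{K}_0(S^n)$ is known (Swan: it is $\widetilde{KO}(S^n)$ by Fossum/Swan's theorem on real spheres). So I would compute $\widetilde{GW}^{[i]}_0(S^n)$ by identifying it with topological $\widetilde{KO}$ or $\widetilde{KSp}$ of $S^n$ (for $i=0,2$), or with $\widetilde{KU}$ of $S^n$ (for $i$ odd, via complex-type forms). This should be proven using the Swan-type argument that algebraic vector bundles with forms on $S^n$ come from clutching along the two hemispheres. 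Then $\W(S^n)=\operatorname{coker}(K_0\xrightarrow{H}GW_0)$. With $\widetilde{KO}(S^n)=\mathbb{Z}_2,\mathbb{Z}_2,0,\mathbb{Z},0,0,0,\mathbb{Z}$ for $n\equiv1,\dots,8$, the hyperbolic map from $\widetilde{K}_0=\widetilde{KU}$-type contributions kills exactly the $\mathbb{Z}$ parts in degrees $2,6$ mod $8$ and leaves the $\mathbb{Z}$ in degrees $4,8$, giving the table.

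The main obstacle is the second step: proving that the reduced Grothendieck--Witt group of the algebraic sphere agrees with the topological $\widetilde{KO}(S^n)$, including torsion, and that the hyperbolic map is the topological complexification-realification map. I would first try the Swan clutching construction for symmetric forms, since it reduces everything to homotopy classes of maps $S^{n-1}\to O$.
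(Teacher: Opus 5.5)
Your plan has a genuine gap, and it sits exactly where the content of the theorem lies. The first step (Brumfiel, Jacobson, or $\W(S^n)[\frac12]\cong KO(S^n_\R)[\frac12]$) only gives ranks, and these were already known. The theorem itself asserts three things: there is no further $2$-torsion for $n\equiv 3,\dots,8 \pmod 8$, there is exactly one $\Z_2$ for $n\equiv 1,2$, and $\W(S^{4m})$ is torsion free. For this you rely on an integral identification of $GW_0(S^n)$ with a topological hermitian $K$-group via ``Swan clutching for symmetric forms''. You do not prove this, and there is no algebraic mechanism behind it. As you noticed yourself, $S^n$ does not decompose into two Witt-contractible opens. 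The complement of $\{x_0=1\}$ is $\Spec \R[y_1,\dots,y_n][1/(1+\sum y_i^2)]$, i.e.\ $\mathbb{A}^n$ minus the quadric $1+\sum y_i^2=0$. That quadric has no real points but has nonzero $2$-primary Witt groups (already $\langle 1\rangle\neq 0$ there, by rank mod $2$), so homotopy invariance cannot discard it. Controlling such contributions is precisely the $2$-torsion problem. Moreover, an isomorphism $GW_0(S^n)\cong KO(S^n_\R)\oplus KO(S^n_\R)$ with diagonal hyperbolic map would immediately give $\W(S^n)\cong KO(S^n_\R)$, i.e.\ Theorem~\ref{thm:solution-to-Knebusch}. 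You have replaced the statement by a stronger unproved one. The closing bookkeeping is also inconsistent in three ways. By Swan, $\widetilde{K}_0(S^n)\cong\widetilde{KO}(S^n_\R)$, not a $\widetilde{KU}$-type group. $\widetilde{KO}(S^n_\R)$ has no $\Z$ summand for $n\equiv 2,6\pmod 8$ that could be ``killed''. And topologically the hyperbolic map is $V\mapsto (V,V)$, not complexification followed by realification.

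The paper avoids any comparison with topology. It embeds $S^n=D_+(y)$ in the projective quadric $\mathbb{P}S^n$, whose closed complement is the smooth anisotropic quadric $\mathbb{P}C^n$. It then runs the $12$-term localization sequence for both $L=\SO$ and $L=\SO(1)$, computing all $\W^i(\mathbb{P}S^n,L)$ and $\W^i(\mathbb{P}C^n,L(1))$ from Witt groups of Clifford algebras (the latter being $2$-primary torsion). Comparing the two sequences settles every entry except $\W^0(S^{4m})$, where one only obtains $0\to\Z^2\to\W(S^{4m})\to\Z_{2^{\delta(4m+1)}}\to 0$. The extension is resolved by the explicit relation $j^*(\UC^+,\theta^+)\perp j^*(\UC^-,\theta^-)\cong 2^{\delta(4m+1)}\langle 1\rangle$ of Lemma~\ref{lem:spinor-decompose}. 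Since the two spinor classes form a basis of the free subgroup, this forces $\partial\langle 1\rangle$ to generate the quotient. Hence $\W(S^{4m})$ is generated by $\langle 1\rangle$ and $j^*(\UC^+,\theta^+)$, so $\W(S^{4m})\cong\Z^2$. Nothing in your proposal plays the role of either the quadric computations or this relation. Without them nothing excludes, say, $\Z\oplus\Z_2$ for $n\equiv 3$ or $\Z^2\oplus\Z_2$ for $n\equiv 4\pmod 8$.
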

		In the 1990s, Knus \cite{Knus} made the computation of $\W(S^n)$ for $n=1,2$ (See also \cite[Example 1.2.6]{Ba05} for a result of Ojanguren on this).  In the 1980s, Brumfiel \cite{Brumfiel} constructed the signature map (a ring homomorphism)
			  \[ \gamma: \W(X) \to KO(X_\R)\]
			  for any algebraic variety over $\mathbb{R}$ and its associate space $X_\R$ of $\R$-points, and showed $\gamma$ is an isomorphism away from two torsions, i.e. $\gamma: \W(X)[\frac{1}{2}] \xrightarrow{\cong} KO(X_\R)[\frac{1}{2}]$. More recently, another proof of Brumfiel's theorem was given by Karoubi, Schlichting, and Weibel \cite{KSW16}. In 2007, Dell'Ambrigio and Fasel computed $\W(S^n)[\frac{1}{2}]$ via Balmer's localization sequence away from two, which is independent of Brumfiel, (cf. \cite{DF08}). 
			 
			  Unfortunately, none of these results reveal the two torsion information of $\W(S^n)$, except for $n = 1,2$. Therefore, the two torsion problem for $\W(S^n)$ ($n\geq 3$) is exactly what Theorem \ref{thm:main} resolves.\ Experts immediately see that $\W(S^n)$ has the pattern of the classical 8-Bott periodicity, i.e. share the same pattern with Atiyah's $KO$-theory $KO(S^n_\R) \cong \mathbb{Z} \oplus \pi_n(O^\infty)$. This leads us to show that 
			   					
\begin{theo}\label{thm:solution-to-Knebusch}
The Brumfiel's map $\gamma: \W(S^n) \to KO(S^n_\R)$ is a ring isomorphism. 
\end{theo}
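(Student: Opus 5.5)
Since $\gamma$ is a ring homomorphism by Brumfiel's construction, it suffices to show that it is bijective on underlying groups. I will do this by splitting off the base point and comparing reduced theories. The inclusion of a rational point $\Spec \R \to S^n$ and the structure map $S^n \to \Spec\R$ split $\W(S^n) = \W(\R)\oplus \widetilde{\W}(S^n)$, and likewise $KO(S^n_\R)=KO(\mathrm{pt})\oplus \widetilde{KO}(S^n_\R)$. The map $\gamma$ is natural, and on a point it is the signature isomorphism $\W(\R)\cong\Z$. Hence the claim reduces to showing $\widetilde{\gamma}:\widetilde{\W}(S^n)\to \widetilde{KO}(S^n_\R)\cong \pi_{n-1}(O)$ is an isomorphism.

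\textbf{Step 1: the reduced Witt group.} I would compute $\widetilde{\W}(S^n)$ as a Grothendieck--Witt/Hermitian $K$-group of a simpler object. Consider the open cover, or the localization sequence, associated to removing the closed point at a pole. Alternatively, use the $\mathbb{A}^1$-homotopy (or real-realization) identification of $S^n$ with the ``sphere'' $\mathbb{A}^n\setminus 0$ modulo real twisting. The cleanest route I expect is Karoubi's fundamental theorem / Balmer's localization with $\mathbb{A}^1$-invariance of Witt groups for regular schemes. Concretely, $S^n$ minus a rational point is isomorphic to $\mathbb{A}^n$ minus a quadric with no real points, and iterating gives long exact sequences expressing $\widetilde{\W}(S^n)$ through the higher Witt groups $\W^{i}(\R)$ and $\W^i(\mathbb{C})$, the latter appearing via the complex points of the removed quadric.

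\textbf{Step 2: periodicity.} The $2$-torsion is exactly where the earlier methods failed, so I would not work with Balmer's triangular Witt groups alone. Instead I would use Schlichting's Hermitian $K$-theory spectrum. There $\W(S^n)$ is $\pi_0$ of the Karoubi-localized (Tate) $L$-theory spectrum, and Karoubi--Schlichting--Weibel's comparison says the real realization map $L(X)\to KO(X_\R)$-type spectra is an equivalence after inverting $2$. The key point to exploit is that the relevant $L$-theory of $\R$ and $\mathbb{C}$ is known integrally: $\W^i(\R)=\Z,0,0,0$ and $\W^i(\mathbb{C})=\Z/2,0,0,0$ for $i\equiv 0,1,2,3 \bmod 4$. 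Feeding these into the sequence of Step 1 yields an $8$-periodic (in $n$) candidate table.

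\textbf{Step 3: comparison with topology.} Construct the topological analogue of the same localization/cofiber sequence: $S^n_\R$ minus a point, with the sequence for $KO$ of the pair. Naturality of $\gamma$ (as a map of spectra, from KSW16) gives a map of long exact sequences. The terms for $\Spec\R$ map to $KO^*(\mathrm{pt})$, and the terms coming from complex-point contributions map to the corresponding $KO$ terms. Then the five lemma reduces the claim to checking that $\gamma$ is an isomorphism in low degrees on $\R$ and $\mathbb{C}$, that is, to the identifications $\W^i(\R)\cong KO^{-i}$-type groups in the appropriate $4$-periodic sense.

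\textbf{Main obstacle.} The hard part is making Step 1 precise integrally: identifying the correct localization sequence whose boundary terms are genuinely computable at $2$, and proving that the boundary maps match the topological ones. This is where the $\Z/2$ in degrees $1,2$ mod $8$ (coming from $\eta,\eta^2$) must be detected. I would first try induction on $n$ via the Witt group of the punctured sphere $S^n\setminus S^{n-1}$, whose real points form two contractible disks, using purity for the divisor $S^{n-1}\subset S^n$. I would then check that the connecting homomorphism corresponds to multiplication by the Hopf element $\eta$ in $KO$, which is detected by $\W^1$ of a conic.
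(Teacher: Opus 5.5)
\textbf{Verdict: genuine gap.} Step 3 is where your argument breaks, and sharpening Step 1 will not fix it. A five-lemma comparison needs $\gamma$ to be an isomorphism on the outer terms of the two long exact sequences. But the outer terms of any algebraic localization sequence for $S^n$ are Witt groups of loci with few or no real points, and on those $\gamma$ is not injective.

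\textbf{Why the comparison fails.}
\begin{itemize}
\item Your own reduction already shows this. $\W(\mathbb{C})\cong\Z_2$, whereas $(\Spec\mathbb{C})_\R=\emptyset$, so $\gamma\colon\W(\mathbb{C})\to KO(\emptyset)=0$ is not an isomorphism.
\item Your geometric input in Step 1 is also off. Stereographic projection identifies $S^n$ minus the whole tangent section $\{x_n=1\}$ with $\mathbb{A}^n\setminus\{1+\sum y_i^2=0\}$. That tangent section is a cone over an anisotropic quadric with a single real point. Removing just a rational point does not give such an identification.
\item In the paper's set-up, $S^n=D_+(y)\subset\mathbb{P}S^n$ has closed complement $\mathbb{P}C^n$, which has no real points. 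So $\mathbb{P}S^n_\R=S^n_\R$ and the topological sequence is trivial. The algebraic sequence, however, carries the torsion $\W(\mathbb{P}C^{n})\cong\Z_{2^{\delta(n+1)}}$. This is not $\W^i(\mathbb{C})$; it is the Witt group of an anisotropic quadric, governed by Clifford algebras.
\item Your induction through $S^n\setminus S^{n-1}$ has the same defect. Already $\W(\mathbb{A}^1_\R\setminus\{t^2+1=0\})\cong\Z\oplus\Z_2$, although its real locus is contractible.
\item Step 2's appeal to Karoubi--Schlichting--Weibel only gives an equivalence after inverting $2$, which says nothing about the $2$-torsion you are after.
\end{itemize}

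\textbf{What is missing.} You need two things, and your ``main obstacle'' paragraph leaves both open:
\begin{itemize}
\item an honest integral computation of $\W(S^n)$;
\item an argument that the nontrivial generator of $\widetilde{KO}(S^n_\R)$ for $n\equiv 1,2,4,8 \bmod 8$ lies in the image of $\gamma$.
\end{itemize}

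\textbf{How the paper does it.} It never compares sequences termwise.
\begin{itemize}
\item It computes $\W(S^n)$ as an abstract group, entirely algebraically. The tool is Balmer's $12$-term sequence for $S^n\subset\mathbb{P}S^n$, used for both $L=\SO$ and $L=\SO(1)$, with the Witt groups of $\mathbb{P}S^n$ and $\mathbb{P}C^n$ computed through Clifford algebras.
\item For $n\equiv 0\bmod 4$ it settles the extension $0\to\W(\mathbb{P}S^{n},\SO(1))\to\W(S^n)\to\W(\mathbb{P}C^n)\to 0$. The key input is the spinor-bundle isometry $j^*(\UC^+,\theta^+)\perp j^*(\UC^-,\theta^-)\cong 2^{\delta(n+1)}\langle 1\rangle$.
\item Independently, it proves $\gamma$ is surjective by writing down explicit forms $(P_n,\hat\beta)$. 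These are kernels of the Clifford idempotent $\tfrac12(\mathbf{x}+\mathbf{1})$ on $A_n\otimes_\R \C^{0,n+1}\wp$, and their real realizations are the Atiyah--Bott--Shapiro/Fossum generators.
\item A surjection between isomorphic finitely generated abelian groups is an isomorphism, so $\gamma$ is bijective. Compatibility with products is Brumfiel's.
\end{itemize}
Your splitting off of the base point is harmless, but it does not replace either of the two missing ingredients.
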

Since the ring structure of $KO(S^n_\R)$ is well-known, Theorem \ref{thm:solution-to-Knebusch} settles the problem of computing the ring $\W(S^n)$. 
 Our search for explicit generators of $W(S^n)$ leads us to  a condensed version of  the Clifford-module description of the non-trivial generator of $KO(S^n_\R)$ due to Atiyah-Bott-Shapiro \cite{ABS63} and Fossum \cite{Fossum69} (see Formula \eqref{eq:Phi} and Definition \ref{def:Pn}, and compare with $\ker(\alpha_M)$ in \cite{Fossum69}). 
Together with Swan's result \cite[Theorem 3]{Swan}, we see that Witt-theory, $KO$-theory and $K$-theory are unified in the case of real spheres, i.e.
\[ \W(S^n)  \xrightarrow{\cong} KO(S^{n}_\R) \xleftarrow{\cong}  K_0(S^n). \]

It is commonly believed that Brumfiel's map $\gamma$ is not an isomorphism integrally (neither injective nor surjective), but only becomes an isomorphism away from two torsions, since there could be real vector bundles on $X_\R$ that can not be realised as an algebraic quadratic form over $X$. This is the case for projective spaces $\mathbb{P}^n$. It might be surprising that Theorem \ref{thm:solution-to-Knebusch} shows that quadratic forms over the real spheres behave nicely, so that all the quadratic forms (up to hyperbolic forms) over real sphere $S^n$ precisely comes from topological real vector bundles stably. It leads to the following question:
\begin{question}\label{ques:Brumfiel}
For which real algebraic varieties is Brumfiel's map $\gamma$ an  isomorphism?
\end{question}
For $X$ to be a point, Question \ref{ques:Brumfiel} has a trivial positive answer. Our real sphere computation provides an interesting non-trivial example to the positive answer of Question \ref{ques:Brumfiel}. In general, the answer of Question \ref{ques:Brumfiel} might depend on the development of the computation of Witt rings of algebraic varieties, and measures the gap between the algebraic theory of quadratic forms and the topology of real vector bundles.

\noindent\textbf{Convention} The quotient group $\Z / m \Z$ is denoted by $\Z_m $ throughout the paper. For any integer $n$, let $\bar{n}$ denote the unique integer such that $1 \leq \bar{n} \leq 8$ and $n \equiv \bar{n} \mod 8$. 
	\section{Proof strategy}
	The Witt ring of quadratic forms over a field was introduced by Witt in 1937, cf.\  \cite{Witt37}. There are several well-known books on the subject, for instance those by  Knus \cite{Knus}, Lam \cite{Lam}, Milnor and Husemoller \cite{MH73}, and Scharlau \cite{Scharlau}. For a useful survey of the framework needed here, see \cite{Ba05}.

\subsection{Construct the non-trivial generator} We will show that the non-trivial generator of $\W(S^n)$ occurs only when $n \equiv 1,2,4,8 \mod 8$ in Section \ref{sec:Wittsphere}. The aim of this section is to construct them explicitly. For Clifford algebras, we follow the notation $\C^{p,q}$ used in Karoubi's book \cite[Section 3]{Karoubi78} (See Section \ref{sec:Clifford} for a summary).

\subsubsection{ } Let $A_n $ be the coordinate ring $\mathbb{R}[x_0, \ldots, x_n]/(\sum_{i} x_i^2 -1)$. Take the generators $e_0,\ldots, e_n \in \C^{0,n+1}$ with $e_i^2 =1$ and $e_i e_j = - e_j e_i$ for $i\neq j$. Consider the endomorphism 
\[ \Phi: A_n \otimes_{\mathbb{R}} \C^{0,n+1} \to A_n \otimes_{\mathbb{R}} \C^{0,n+1} \]
given by 
\begin{equation}\label{eq:Phi} \Phi(a \otimes u) = \frac{1}{2}( \sum_{i=0}^n x_i  a \otimes e_i  u + a \otimes u) =: \frac{1}{2}(\mathbf{x} +\mathbf{1}) \cdot  (a\otimes u)    \end{equation}
where $\mathbf{x} := \sum_{i=0}^n x_i \otimes e_i$ is the left multiplication.\ It is not hard to see that $\Phi$ is an idempotent operator, since $\sum_ix_i^2 =1 $ in $A_n$. Let $\sigma:\C^{0,n+1} \to \C^{0,n+1}$ be the canonical involution, i.e. $\sigma(e_i) = e_i$ and $\sigma(xy) = \sigma(yx)$.\ Note that the trace form 
\[ \beta: \C^{0,n+1} \times \C^{0,n+1} \to \mathbb{R}, (x,y) \mapsto  \tr(\sigma(x) y) \] 
is positive definite and non-degenerate. Moreover, the trace form $\beta$ is compatible with $\Phi$, i.e. the diagram
\[ \xymatrix{ A_n \otimes_{\mathbb{R}} \C^{0,n+1} \ar[r]^-{\Phi} \ar[d]^-{\hat\beta} &  A_n \otimes_{\mathbb{R}} \C^{0,n+1} \ar[d]^-{\hat\beta}  \\
                  \Hom_{A}( A_n \otimes_{\mathbb{R}} \C^{0,n+1} , A_n ) \ar[r]^{\Phi^\vee} &  \Hom_{A}( A_n \otimes_{\mathbb{R}} \C^{0,n+1} , A_n )           }\]
commutes where $\hat\beta(a \otimes x, b \otimes y) := ab \beta(x,y)$. In other words, $\hat{\beta}(\Phi(x) , y) =  \hat{\beta}(x , \Phi(y)) $. 

\subsubsection{ }
The algebra $\C^{0,n+1}$ is semi-simple, and we can always find a primitive idempotent element $\wp$ so that $\C^{0,n+1} \wp$ is an irreducible left ideal in $\C^{0,n+1}$.   

\begin{Def}\label{def:Pn}
Define \[P_n:= \ker(\Phi: A_n \otimes_{\mathbb{R}} \C^{0,n+1} \wp \to A_n \otimes_{\mathbb{R}} \C^{0,n+1} \wp ). \]
\end{Def}

Since $\Phi$ is an idempotent operator, $P_n$ is a projective $A_n$-module.  

\begin{lem}\label{lem:Pntheta}
The form $\hat\beta$ restricting to $P_n$ is non-degenerate. 
\end{lem}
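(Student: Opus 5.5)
The plan is to exhibit $P_n$ as an orthogonal direct summand of a free module carrying a non-degenerate form. An orthogonal summand of a non-degenerate space is automatically non-degenerate.

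Put $V := \C^{0,n+1}\wp$ and $M := A_n \otimes_{\mathbb{R}} V$. Since $\mathbf{x}$ is left multiplication, it preserves the left ideal $V$, so $\Phi$ restricts to an idempotent endomorphism of $M$. This gives the decomposition
\[ M = \ker \Phi \oplus \operatorname{im}\Phi = P_n \oplus Q_n, \qquad Q_n := \operatorname{im}\Phi, \]
as $A_n$-modules.

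\emph{Step 1: $\hat\beta$ is non-degenerate on $M$.} The trace form $\beta$ is positive definite on $\C^{0,n+1}$, so its restriction to the real subspace $V$ is still positive definite, hence non-degenerate. Choose a real basis $v_1,\dots,v_r$ of $V$. Then $M$ is free over $A_n$ on $1\otimes v_j$, and the Gram matrix of $\hat\beta$ in this basis is the constant real matrix $(\beta(v_i,v_j))$. This matrix is invertible over $\mathbb{R}$, hence over $A_n$. Therefore $\hat\beta\colon M \to \Hom_{A_n}(M,A_n)$ is an isomorphism.

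\emph{Step 2: $P_n \perp Q_n$.} We use the self-adjointness $\hat\beta(\Phi x,y)=\hat\beta(x,\Phi y)$ stated before Definition \ref{def:Pn}. It can be rechecked from $\sigma(e_i)=e_i$ and cyclicity of the trace, which give $\beta(e_i x,y)=\tr(\sigma(x)e_iy)=\beta(x,e_iy)$. Take $x\in P_n$ and $y \in Q_n$, so $y=\Phi y$. Then
\[ \hat\beta(x,y)=\hat\beta(x,\Phi y)=\hat\beta(\Phi x,y)=0. \]

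\emph{Step 3: conclude.} With respect to $M=P_n\oplus Q_n$ and $M^\vee = P_n^\vee\oplus Q_n^\vee$, Step 2 shows that the adjoint map $\hat\beta\colon M\to M^\vee$ is block diagonal, with blocks $\hat\beta|_{P_n}$ and $\hat\beta|_{Q_n}$. A block-diagonal map is an isomorphism exactly when each block is one. By Step 1 the whole map is an isomorphism, so $\hat\beta|_{P_n}\colon P_n\to P_n^\vee$ is an isomorphism. This is the claim, and the same argument shows $Q_n$ is non-degenerate as well.

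The only step needing care is Step 1. One must not assume a symmetric choice of $\wp$; the point is that positive definiteness over $\mathbb{R}$ survives restriction to an arbitrary subspace. The rest is formal.
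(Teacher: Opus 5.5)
Your proof is correct and follows the paper's argument. The trace form is positive definite, so $\hat\beta$ is non-degenerate on $A_n\otimes_{\mathbb{R}}\C^{0,n+1}\wp$; self-adjointness of $\Phi$ makes $P_n=\ker\Phi$ orthogonal to $Q_n=\ker(1-\Phi)=\operatorname{im}\Phi$, and non-degeneracy then passes to the orthogonal summand, with your Gram-matrix and block-diagonal steps spelling out what the paper leaves implicit (one small note: $\beta(e_ix,y)=\beta(x,e_iy)$ needs only that $\sigma$ is an anti-automorphism fixing each $e_i$, not cyclicity of the trace).
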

\begin{proof}
The trace form 
\[ \beta: \C^{0,n+1} \times \C^{0,n+1} \to \mathbb{R}, (x,y) \mapsto  \tr(\sigma(x) y) \] 
is positive definite. Hence, it restricts a non-degenerate symmetric form  $ \beta: E \times E\to \mathbb{R}$ on the subspace $E: = \C^{0,n+1}\wp$, which gives rise to a non-degenerate form on $A_n\otimes_\R E$ by tensoring the trivial form on $A_n$. Write
\[A_n \otimes_\R E = \ker(\Phi) \oplus \ker(1 - \Phi) = P_n \oplus Q_n \]
Since $\hat{\beta}(\Phi(x) , y) =  \hat{\beta}(x , \Phi(y)) $, we must have $\hat{\beta}(p,q) = 0$ for any $p \in P_n$ and $q \in Q_n$. It follows that $\hat{\beta}$ restricts to a non-degenerate form on $P_n$. 
\end{proof}
\begin{remark}
The module $P_n$ provides another construction of the projective module over $A_n$ studied by Fossum \cite{Fossum69}. Later, we will show that the form $(P_n, \hat\beta)$ represents the non-trivial generator in $\W(S^n)$ for $n \equiv 1,2,4,8 \mod 8$. 
\end{remark}

\subsection{On the Brumfiel's map} Let $X$ be a smooth algebraic variety over $\R$. We may assume that $X_\R$ is compact which is enough for us. Recall the Brumfiel's map 
\begin{align*}
	\gamma:  \W(X) &\to KO(X_\R) \\
	[V,\phi] & \mapsto [V_\R^+] - [V_\R^-]
\end{align*} 
where the form $(V_\R,\phi_\R)$ on $X_\R$ yields a decomposition $V_\R = V_\R^+ \oplus V_\R^-$ such that the restriction of form $\phi_\R$ to $V_\R^+$ (resp. $V_\R^-$) is positive (resp. negative), cf. \cite[Chapter I.8]{Karoubi78}. 
The aim of this section is to prove that 
\begin{Prop}\label{prop:brumfiel-surj}
	The map $\gamma: \W(S^n) \to KO(S^n_{\R})$ is surjective. 
\end{Prop}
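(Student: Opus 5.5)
We prove surjectivity by computing $KO(S^n_\R) = \widetilde{KO}(S^n_\R)\oplus \Z$ and hitting each summand. The $\Z$ summand is the rank, and it is the image of the unit form $\langle 1\rangle$ on $A_n$: the trivial line bundle with positive definite form gives $\gamma(\langle 1\rangle)=[\R]$. The work therefore lies in the reduced part $\widetilde{KO}(S^n_\R)\cong \pi_n(BO)=\pi_{n-1}(O)$, which by Bott periodicity is $\Z_2,\Z_2,0,\Z,0,0,0,\Z$ for $n\equiv 1,\dots,8 \bmod 8$. For $n\equiv 3,5,6,7$ there is nothing to prove. For $n\equiv 1,2,4,8$ it suffices to exhibit one form whose image under $\gamma$ is a generator of $\widetilde{KO}(S^n_\R)$ modulo the rank summand. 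The natural candidate is $(P_n,\hat\beta|_{P_n})$ from Lemma \ref{lem:Pntheta}.

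First, compute $\gamma(P_n,\hat\beta)$. The trace form $\beta$ is positive definite on $E=\C^{0,n+1}\wp$, so $\hat\beta$ is positive definite on the real bundle $S^n_\R\times E$. Its restriction to the subbundle $(P_n)_\R$ is therefore positive definite, so $(P_n)_\R^-=0$ and
\[ \gamma(P_n,\hat\beta)=[(P_n)_\R]\in KO(S^n_\R).\]
The problem thus reduces to a statement about the topological bundle $(P_n)_\R$. Its fibre over $x\in S^n_\R$ is the $(-1)$-eigenspace of Clifford multiplication by $x=\sum x_ie_i$ on the irreducible $\C^{0,n+1}$-module $E$.

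Second, identify this bundle with the Atiyah--Bott--Shapiro generator. The subspaces $\ker(x+1)$ for $x\in S^n$ form the standard clutching construction. Restricting $E$ to the subalgebra $\C^{0,n}$ generated by $e_1,\dots,e_n$, the kernel of $x+1$ over the upper and lower hemispheres can be trivialised by projecting to the fixed space at $x=\pm e_0$. The resulting clutching map on the equator $S^{n-1}$ is, up to homotopy, $y\mapsto$ multiplication by $e_0y$, i.e. the Clifford-module map $S^{n-1}\to O(\dim E/2)$. By Atiyah--Bott--Shapiro (equivalently Fossum's $\ker(\alpha_M)$), when $E$ is an irreducible $\C^{0,n+1}$-module this class generates $\widetilde{KO}(S^n)\cong \widehat{M}_{n}/i^*\widehat{M}_{n+1}$. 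The main obstacle I expect is this step: matching sign and indexing conventions ($\C^{0,n+1}$ versus ABS's $C_k$ with $e_i^2=-1$, and which restriction map appears in the ABS isomorphism). I would first verify the convention-sensitive cases $n=1$ (Möbius bundle) and $n=2$ by hand, checking that $(P_1)_\R$ is the Möbius line bundle, and then invoke ABS, or Fossum's comparison, for general $n$.

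Finally, we pass from a generator of $\widetilde{KO}$ to surjectivity onto all of $KO(S^n_\R)$. Since $\gamma$ is a group homomorphism, its image contains $[(P_n)_\R]$ and $[\R]$. Because $[(P_n)_\R]-\operatorname{rk}(P_n)\cdot[\R]$ generates $\widetilde{KO}(S^n_\R)$ (cyclic in every case), these two classes generate $KO(S^n_\R)=\Z[\R]\oplus\widetilde{KO}(S^n_\R)$, and $\gamma$ is surjective.
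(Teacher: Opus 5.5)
Your proposal is correct and follows essentially the same route as the paper: you hit the rank summand with $\langle 1\rangle$, use positivity of $\hat\beta$ to get $\gamma(P_n,\hat\beta)=[(P_n)_\R]$, and identify $(P_n)_\R$ with the Atiyah--Bott--Shapiro/Fossum generator via the $e_0$-grading $E=E^0\oplus E^1$ and $f_i=e_0e_i$. The only difference is in that last identification. You compute the equatorial clutching function $y\mapsto e_0y=\sum y_if_i$ explicitly. The paper instead matches $\Phi_\R$ directly with Fossum's $\alpha_E$ after applying the automorphism $u^0+u^1\mapsto u^0-u^1$.
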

\begin{proof}
	If $n \equiv 3,5,6,7 \mod 8$, then $KO(S_{\R}^n)$ is isomorphic to $\Z$ generated by the trivial line bundle $S^n\times \mathbb{R}$. It is clear that $\gamma$ sends the trivial form $(\mathcal{O}, \id)$ of $S^n$ to $S_{\R}^n\times \mathbb{R}$.  The result follows. 
	
	If $n \equiv 1,2,4,8 \mod 8$, then $KO(S_{\R}^n)$ has two generators. One is represented by the trivial bundle $S_{\R}^n\times \R$, and $\gamma[\SO,\id] = [S_{\R}^n\times \R]$ as before. The other one $\xi$ which is non-trivial can be however desribed by an algebraic process (cf.\ Atiyah-Bott-Shapiro \cite{ABS63}). Fossum \cite{Fossum69} further studied this program which is what we use here. 

Let $P:= P_n$ and $E:=\C^{0,n+1}\wp$.\ Since $e_0 \in \C^{0,n+1}$ satisfies $e_0^2 =1$, we have a decomposition 
\[ E = E^0 \oplus E^1\]
with $ E^0 := \ker(e_0+1)$ and  $E^1 := \ker(e_0-1)$. 
Set $f_i = e_0 e_i$ (for $i \neq 0$) which serves as the generators of $\C^{n,0}$. It is clear that $f_i^2 = -1$ and $f_i f_j = - f_j f_i$ for $i \neq j$. If $(u^0,u^1) \in E^0 \oplus E^1$, then $f_iu^0 \in E^1$, since $e_0 f_i u^0 = - f_i e_0 u^0 = f_i u^0 $, and similarly $f_iu^1 \in E^0$. Therefore, $E= E^0 \oplus E^1$ is a graded left $\C^{n,0}$-module in view of the basis $f_i$. The irreducibility of $E$ as a left $\C^{0,n+1}$ implies that $E$ is a graded left irreducible $\C^{n,0}$-module. 

The form $\beta_\R$ is positive definite, i.e. a metric on $P_\R$, therefore $\gamma([P,\beta]) = [P_\R]$. Note that, via realization, the operator \eqref{eq:Phi} induces a map $ S_{\R}^n\times  E \xrightarrow{\Phi_\R} S_{\R}^n \times  E $ by 
\[ \Phi_\R (\mathbf{a}, u) =(\mathbf{a}, \frac{1}{2} (\mathbf{1}+ \mathbf{a_e}) u ) \]
where $\mathbf{a} := (a_0 , \ldots, a_n) \in \mathbb{R}^{n+1}$ and $\mathbf{a_e} := \sum_{i =0}^n a_i e_i$ such that $\sum_{i =0}^n a_i^2 = 1$.\ Hence, $P_\R = \ker (\Phi_\R)$.\  
It suffices to show that $\Phi_\R $ coincide with $ \alpha_E$ in \cite{Fossum69}. Note that 
\[ e_0 u^0 = -u^0, \quad e_0 u^1 = u^1, \quad e_i u^0 = f_i u^0, \quad e_i u^1 = - f_i u ^1   \]
for $i \geq 1$. Let $a^*$ denote $\sum_{i=1}^n a_i f_i$ as in \cite{Fossum69}. The map $\Phi_\R$ then acts as
\begin{align*}
	\Phi_\R(\mathbf{a}, u^0 + u^1) &=  \big( \mathbf{a}, \frac{1}{2}(\mathbf{1} + \mathbf{a_e})(u^0 + u^1) \big) \\
	&= \big( \mathbf{a}, \frac{1}{2} ( u^0 + u^1 - a_0 u^0 + a_0 u^1 + a^* u^0 - a^* u^1 ) \big) 
\end{align*}
which coincide with Fossum's $\alpha_E(\mathbf{a}, u^0 + u ^1)$ via the automorphism 
\[T:  S_{\R}^n\times  E \xrightarrow{\Phi} S_{\R}^n \times  E, \quad (\mathbf{a}, u^0 + u^1) \mapsto (\mathbf{a}, u^0 - u^1).   \]   
Hence, $P_\R \cong \ker(\alpha_E)$ which is the non-trivial generator of $KO(S^n)$. 
\end{proof}

\subsection{Proof of Theorem  \ref{thm:solution-to-Knebusch} }
In view of Proposition \ref{prop:brumfiel-surj}, Theorem \ref{thm:solution-to-Knebusch} will follow from Theorem \ref{thm:main} by noting that $\W(S^n)$ and $KO(S^n)$ are both isomorphic to the same finitely generated abelian group. Thus, we shall focus on proving Theorem \ref{thm:main}.

\subsection{Proof outline of Theorem \ref{thm:main}} 
The full proof of Theorem \ref{thm:main} will be given in Section \ref{sec:Wittsphere}. This section aims to outline the proof strategy for the convenience of our readers. 
Our proof of Theorem \ref{thm:main} relies on Balmer's Witt theory \cite{Ba00} together with the author's computations of Witt groups of quadric hypersurfaces (cf. \cite{X19} and \cite{X26}).\ Our argument in proving Theorem \ref{thm:main} is entirely algebro-geometric and does not appeal to $KO$-theory. Following Dell'Ambrogio and Fasel \cite{DF08}, we set
\begin{align*}
\textnormal{	$\mathbb{P}S^n  =   \mathrm{Proj}\,\mathbb{R}[x_0,  \ldots, x_n,y]/(\sum\nolimits_i x_i^2 -y^2),\quad 
	\mathbb{P}C^{n}  = \mathrm{Proj}\,\mathbb{R}[x_0, \ldots, x_n]/( \sum\nolimits_i x_i^2 )$. }
\end{align*}
Let us consider the diagram
\begin{equation*}
	\small
	\xymatrix{ 
		\mathbb{P}C^{n} \ar[r]^-{i} \ar[d]_{\iota} & 	\mathbb{P}S^{n}  \ar[d]_{\kappa} & \ar[l]_-{j}  S^{n} \ar[d]_{\gamma} \\
		\mathbb{P}^{n} \ar[r]^-{\imath} &  \mathbb{P}^{n+1} & \ar[l]_{\jmath} \mathbb{A}^{n+1}
	}
\end{equation*}
in which $i,\iota,\imath,\kappa,\gamma$ are closed embeddings and $j,\jmath$ are open embeddings. Let $\SO(i)$ be the line bundle $\kappa^*\SO_{\mathbb{P}^{n+1}}(i)$ on $\mathbb{P}S^n$. Then, $j^*\SO(i)$ is trivial over $S^n$ for any $i$. 

Since the real sphere $S^n$ is isomorphic to the open subscheme $D_+(y)\subset \mathbb{P}S^n$, the $12$-term localization sequence \cite{Ba00}, together with the d\'evissage theorem \cite{Gille07}, yields the following $12$-term periodic exact sequence
\begin{equation}\label{eq:balmer-12-term-exact}
	\Small
	\xymatrix@C=1.5em@R=3em{
		\W^{3}(S^{n}) \ar[r]^-{\partial_L} & \W^{3}(\mathbb{P}C^{n}, L(1)) \ar[r]^-{i_*} & \W^{0}(\mathbb{P}S^{n}, L) \ar[r]^-{j^*} & \W^{0}(S^{n}) \ar[d]^-{\partial_L} \\
		\W^{3}(\mathbb{P}S^{n}, L) \ar[u]^-{j^*} & & & \W^{0}(\mathbb{P}C^{n}, L(1)) \ar[d]^-{i_*} \\
		\W^{2}(\mathbb{P}C^{n}, L(1)) \ar[u]^-{i_*} & & & \W^{1}(\mathbb{P}S^{n}, L) \ar[d]^-{j^*} \\
		\W^{2}(S^{n}) \ar[u]^-{\partial_L} & \W^{2}(\mathbb{P}S^{n}, L) \ar[l]_-{j^*} & \W^{1}(\mathbb{P}C^{n}, L(1)) \ar[l]_-{i_*} & \W^{1}(S^{n}) \ar[l]_-{\partial_L}
	}
\end{equation}
where $L$ may be taken to be either $\SO$ or $\SO(1)$.

\begin{remark}
	In \cite{DF08}, the localization sequence \eqref{eq:balmer-12-term-exact} is used only for the untwisted case $L=\SO$. In our situation, the twisted case $L=\SO(1)$ is equally important. Although the formal shape of the sequences are similar, the groups and boundary maps behave quite differently.\ This additional input is essential for computing $\W^i(S^n)$.
\end{remark}

The computations in \cite{X19,X26} can be used to reveal the groups $\W^i(\mathbb{P}S^n,L)$ and $\W^i(\mathbb{P}C^n,L(1))$, which we investigate in Section \ref{sect:witt-quadric}. Combining these computations with the exact sequence \eqref{eq:balmer-12-term-exact} for both $L=\SO$ and $L=\SO(1)$, we determine all groups $\W^i(S^n)$ except in the case $i=0$ and $n\equiv 0 \pmod 4$. 

To deal with this delicate remaining case, we show that for $L=\SO(1)$ and $n=4m$, the localization sequence \eqref{eq:balmer-12-term-exact} yields a short exact sequence
\begin{equation}\label{eq:short-exact-WS4}
	\xymatrix{
		0 \ar[r] &
		\W(\mathbb{P}S^{4m}, \SO(1)) \ar[r]^-{j^*} &
		\W(S^{4m}) \ar[r]^-{\partial_{\SO(1)}} &
		\W(\mathbb{P}C^{4m}) \ar[r] &
		0.}
\end{equation}
Moreover, we prove that $\W(\mathbb{P}S^{4m},\SO(1)) \cong \Z \oplus \Z$
and  $\W(\mathbb{P}C^{4m}) \cong \Z/2^{\delta(4m+1)}$ (see Section \ref{sect:witt-quadric}). However, these information do not determine the group structure of $\W(S^{4m})$ completely. The key additional input comes from spinor bundles. More precisely, we construct symmetric forms $(\mathcal U^+,\theta^+)$ and  $(\mathcal U^-,\theta^-)$ 
on the two spinor bundles over $\mathbb{P}S^{4m}$ and show that their classes generate $\W(\mathbb{P}S^{4m},\SO(1)) \cong \Z\oplus \Z.$
We then prove in Lemma \ref{lem:spinor-decompose} that, after restriction to $S^{4m}$, one has an isometry
\[
j^*(\mathcal U^+,\theta^+) \perp j^*(\mathcal U^-,\theta^-)
\cong 2^{\delta(4m+1)}\langle 1\rangle .
\]
This relation is the final piece that allows us to solve the extension problem in \eqref{eq:short-exact-WS4}, and hence to conclude that
$\W(S^{4m}) \cong \Z \oplus \Z$.
	\section{Witt groups of Clifford algebras over $\mathbb{R}$} \label{sec:Clifford}
	
	Let $\varphi_{p,q}$ denote the quadratic form $p \langle -1 \rangle \perp q\langle 1 \rangle $ ($p,q >0$) over $\mathbb{R}$. We adopt the notation $\mathrm{C}^{p,q}: = C(\varphi_{p,q})$ and $\mathrm{C}^{0,0} = \mathbb{R}$ to  denote the Clifford algebra of $\varphi_{p,q}$. Let $\mathrm{C}^{p,q}_0$ be the even part of the Clifford algebra $\mathrm{C}^{p,q}$. 
	We need to use two kinds of involution on $\mathrm{C}_0^{p,q}$, the first one is called the canonical involution (or reversing involution) denoted by $\sigma_0$, and the other one $\sigma_1^v$ is given by $\mathrm{int}(v) \sigma_0 :=  v\sigma_0v^{-1}$ where $v \in \mathbb{R}^{p+q} \subset \C^{p,q}$ is chosen so that $\varphi(v) \neq 0$.\ The type of $\sigma_1^v$ does not depend on the choice of $v$.\ The following result should be well-known to experts. 
	
	\begin{lem}
The structure of the Clifford algebras $\mathrm{C}_0^{0,n}$ and $\mathrm{C}_0^{1,n-1}$ together with the type of the involution $\sigma_0$ and $\sigma_1^v$ are determined in the following table.  
	\end{lem}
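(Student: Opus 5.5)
The plan is to reduce everything to the classical structure of the full Clifford algebras $\C^{p,q}$ via the even-part isomorphisms, and then to determine the type of each involution by a dimension count of its symmetric elements.

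\textbf{Step 1: structure of the even parts.} Whenever $\varphi(v)\neq 0$, there is a standard isomorphism $\C_0^{p,q}\cong C(\varphi')$ of algebras.
\begin{itemize}
\item Take $v=e_n$ with $e_n^2=1$ in $\C^{0,n}$. The map $e_i\mapsto e_ie_n$ for $i<n$ gives $\C_0^{0,n}\cong \C^{n-1,0}$, since $(e_ie_n)^2=-1$.
\item Similarly $\C_0^{1,n-1}\cong \C^{0,n-1}$ or $\C^{n-1,0}$, depending on whether we split off the negative vector or a positive one.
\end{itemize}
The table of $\C^{p,q}$ as matrix algebras over $\R$, $\mathbb{C}$ or $\mathbb{H}$ (possibly doubled) is well known. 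It is given by the periodicities $\C^{p+1,q+1}\cong M_2(\C^{p,q})$ and $\C^{p,q+8}\cong M_{16}(\C^{p,q})$, together with the small cases. Reading it off mod $8$ gives the algebra column of the table.

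\textbf{Step 2: type of $\sigma_0$.} Write $A=\C_0^{p,q}$, and let $\deg$ denote its degree over its centre $Z$. The type of an involution on a central simple algebra (or on each factor, when the centre is $\R\times\R$) is detected as follows.
\begin{itemize}
\item If $\sigma$ is trivial on $Z$ and $Z=\R$, it is orthogonal when $\dim \mathrm{Sym}(A,\sigma)=\deg(\deg+1)/2$ and symplectic when $\dim\mathrm{Sym}(A,\sigma)=\deg(\deg-1)/2$.
\item If $\sigma$ is nontrivial on $Z=\mathbb{C}$, it is unitary.
\item If $Z=\R\times\R$, $\sigma$ is either exchange-type, when it swaps the factors, or it preserves each factor and has a type on each one.
\end{itemize}
For the reversion $\sigma_0$, a basis monomial $e_I$ with $|I|=k$ satisfies $\sigma_0(e_I)=(-1)^{k(k-1)/2}e_I$. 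Hence
\[\dim \mathrm{Sym}(A,\sigma_0)=\sum_{k \text{ even},\ k\equiv 0,1 \bmod 4}\binom{m}{k},\]
where $m=p+q$. These are the classical sums $\sum\binom{m}{4j}$, which equal $2^{m-2}+2^{(m-2)/2}\cos(m\pi/4)$. Comparing with $\deg(\deg\pm1)/2$ for the $\deg$ found in Step 1 gives the type. When the centre is $\R\oplus\R$ or $\mathbb{C}$ (that is, $m$ odd), I instead determine directly how $\sigma_0$ acts on the central element $\omega=e_1\cdots e_m$, which lies in $A$ only when $m$ is even. So for $m$ even I must also check $\sigma_0(\omega)=(-1)^{m(m-1)/2}\omega$ to decide between exchange/unitary and the type-preserving case.

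\textbf{Step 3: type of $\sigma_1^v$.} Conjugation gives $\sigma_1^v(e_I)=v\sigma_0(e_I)v^{-1}$. Taking $v=e_1$, a monomial of even length $k$ picks up the extra sign $+1$ or $-1$ according to whether $1\in I$. So $\dim\mathrm{Sym}(A,\sigma_1^v)$ is again a sum of binomials, now with shifted congruence conditions mod $4$; equivalently $\sigma_1^v(e_I)=(-1)^{k(k+1)/2}e_I$ up to the sign $\varphi(v)$. The same comparison yields the type. Independence of $v$ follows because any two such involutions differ by an inner automorphism by an element of the Clifford group, which preserves the dimension of symmetric elements.

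I expect the main obstacle to be the bookkeeping across the eight residue classes. This includes the split-centre cases, where "type" must be read factorwise, and the correct sign conventions in $\C^{1,n-1}$, where $\varphi(v)$ may be negative. I would check each residue class against small cases, such as $\C_0^{0,2}\cong\mathbb{C}$ and $\C_0^{0,3}\cong\mathbb{H}$ with $\sigma_0$ the quaternion conjugation (symplectic), and then invoke $8$-periodicity, which is compatible with the involutions via $\C^{0,8}\cong M_{16}(\R)$ with its transpose involution.
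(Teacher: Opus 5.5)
Your strategy is sound, and for the algebra column it is the paper's own. The paper uses Swan's isomorphisms $\C_0(\varphi)\cong\C_0(-\varphi)$ and $\C_0(\varphi\perp\langle -1\rangle)\cong C(\varphi)$ to get $\C_0^{0,n}\cong\C^{n-1,0}$ and $\C_0^{1,n-1}\cong\C^{0,n-1}$, then reads these off from Atiyah--Bott--Shapiro, just as your splitting-off argument does.

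For the two type columns the routes differ: the paper only cites Knus--Merkurjev--Rost--Tignol, whereas you verify the entries with the criterion $\dim\mathrm{Sym}=d(d\pm1)/2$ applied to explicit monomial counts. Your route makes explicit two things the citation leaves implicit. First, the paper never says how the $\sigma_1^v$ column is obtained. Second, your counts depend only on $m=p+q$, which is what justifies one type column serving both $\C_0^{0,n}$ and $\C_0^{1,n-1}$. A shortcut for $\sigma_1^v$: under $w\mapsto wv$, $(\C_0(V),\sigma_1^v)$ becomes $C(v^\perp,-\varphi(v)\varphi|_{v^\perp})$ with its reversion, while $\sigma_0$ becomes Clifford conjugation.

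Several stated details are wrong, though none is fatal.

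(i) In Step 3 the ``equivalent'' formula $\sigma_1^v(e_I)=(-1)^{k(k+1)/2}e_I$ is false. For even $k$ it equals $(-1)^{k(k-1)/2}$, so it would give $\sigma_1^v=\sigma_0$ on the even part and contradict, e.g., the rows $n=2,3$. Also, no sign $\varphi(v)$ can enter, since $\sigma_1^v$ is unchanged when $v$ is rescaled. Keep only your first description. It gives $\dim\mathrm{Sym}(\sigma_1^v)=\sum_{k\equiv 0\,(4)}\binom{m-1}{k}+\sum_{k\equiv 2\,(4)}\binom{m-1}{k-1}$, and for $m$ even $\sigma_1^v(\omega)=-\sigma_0(\omega)$.

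(ii) The centre is $\R\times\R$ or $\mathbb{C}$ when $m$ is even, not odd.

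(iii) Your case list omits $Z=\mathbb{C}$ with $\sigma|_Z=\id$. That is exactly the situation for $\sigma_1^v$ on $\C_0^{0,2}=\mathbb{C}$ and $\C_0^{0,6}=M_4(\mathbb{C})$, and for $\sigma_0$ on $\C_0^{1,3}$ and $\C_0^{1,7}$. In that case compare $\dim_{\mathbb{C}}\mathrm{Sym}$ with $d(d\pm1)/2$, where $d$ is the degree over $\mathbb{C}$.

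(iv) Splitting off a positive vector from $\C^{1,n-1}$ yields $\C^{n-2,1}$, not $\C^{n-1,0}$; for $n=2$, $\mathbb{C}\not\cong\R\times\R$.

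(v) Composing an involution with an inner automorphism does not preserve $\dim\mathrm{Sym}$ in general: compare transpose with $\mathrm{int}(J)\circ t$ on $M_2(\R)$. Independence of $v$ holds because $u=wv^{-1}$ is $\sigma_1^v$-symmetric, so $\mathrm{Sym}(\sigma_1^w)=u\,\mathrm{Sym}(\sigma_1^v)$. Even more simply, your count applies verbatim to any anisotropic $v$, since such a $v$ extends to an orthogonal basis.
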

	\begin{table}[ht]
		\centering
		\footnotesize
		\renewcommand{\arraystretch}{1.5}
		\scalebox{0.9}{
			\Rotatebox{0}{	\begin{tabular}{|l||c|c||c|c|c|c| c| c|c|}
					\hline
					$n$	   &	{$\mathrm{C}_0^{0,n}$} & 
					{$\mathrm{C}_0^{1,n-1}$} & {Type of $\sigma_{0}$} & {Type of $\sigma_1^v$} \\
					\hline \hline
					$ 1 $	& $\mathbb{R}$ &  $\mathbb{R}$ & orthogonal & orthogonal \\
					\hline
					$2$	& $\mathbb{C} $  & $\mathbb{R} \times \mathbb{R} $ & unitary&orthogonal  \\
					\hline
					$3$	& $\mathbb{H} $ & $M_2(\mathbb{R}) $ & symplectic &orthogonal  \\
					\hline
					$4$	& $\mathbb{H}  \times \mathbb{H} $  & $ M_2(\mathbb{C})$  & symplectic &unitary  \\
					\hline
					$5$	& $M_2(\mathbb{H}) $ & $M_2(\mathbb{H}) $   &  symplectic &symplectic  \\
					\hline
					$6$	& $M_4(\mathbb{C})$  & $M_2(\mathbb{H}) \times M_2(\mathbb{H})$ &  unitary &symplectic  \\
					\hline
					$ 7$	& $M_8(\mathbb{R}) $  & $M_4(\mathbb{H}) $ & orthogonal &symplectic  \\
					\hline
					$ 8$	& $M_8(\mathbb{R}) \times M_8(\mathbb{R})$  & $ M_8(\mathbb{C})$ & orthogonal &unitary  \\
					\hline
		\end{tabular} }}  
		\label{table:clifford-algebra-structure}
	\end{table}

	\begin{proof}
		If $\varphi$ is a quadratic form, then $\C_0(\varphi) \cong \C_0(-\varphi)$ and $\C_0(\varphi \perp \langle  -1 \rangle) \cong C(\varphi)$ \cite[Lemma 4.5]{Swan}. It follows that $\C_0^{0,n} \cong \C^{n,0}_0 \cong \C^{n-1,0}$, and that $\C^{1,n-1}_0 \cong \C^{0,n-1} $. Both algebras are computed in \cite{ABS63}. The types of involutions are settled in the  book of Knus-Merkerjev-Rost-Tignol, \cite{KMRT98}. 
	\end{proof}
	\begin{remark}
		For $n>9$, we may adopt the periocity theorem on Clifford algebras \cite[Section 4]{ABS63}. The type of involution is also periodic.  
	\end{remark}
\begin{theo}\label{thm:Wittcliff} For any integer $n \ge 0$, the Witt groups $\W^i(\C_0^{0,n},\sigma_0)$, $\W^i(\C_0^{0,n},\sigma_1^v)$, $\W^i(\C_0^{1,n-1},\sigma_0)$ and $\W^i(\C_0^{1,n-1},\sigma_1^v)$ are determined in Table \ref{table:sigma_combined} for $i$ even, and vanish for $i$ is odd. 
\end{theo}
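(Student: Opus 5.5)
The proof will reduce everything to Witt groups of the three real division algebras $\mathbb{R}$, $\mathbb{C}$, $\mathbb{H}$ with their standard involutions, using the previous Lemma, which gives each algebra as $M_k(D)$ or $M_k(D)\times M_k(D)$ together with the type of $\sigma_0$ or $\sigma_1^v$. We work with the shifted Witt groups $\W^i$ of the algebra with involution in Balmer's sense, i.e.\ the Witt groups of the derived category of finitely generated projective modules with the duality $\Hom(-,A)$ twisted by the involution. For $i\equiv 0,1,2,3 \bmod 4$ these are the classical Witt groups of $\epsilon$-hermitian forms ($\epsilon=+1$ for $i\equiv 0$, $\epsilon=-1$ for $i\equiv 2$) and the odd-shifted (formation) groups. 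Since each algebra is semisimple, and in particular of finite global dimension zero, the derived category is split. Hence every symmetric space is metabolic modulo its homology, and the odd Witt groups vanish. This standard fact follows from Balmer's identification of triangular Witt groups of an exact category in which all sequences split with the classical groups. It gives the second part of the statement directly.

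For the even groups there are two steps. First, Morita invariance: $M_k(D)$ with an involution of a given type is Morita-equivalent, as an algebra with involution, to $D$ with its canonical involution $\bar{\ }$ (identity on $\mathbb{R}$, conjugation on $\mathbb{C}$ or $\mathbb{H}$). The equivalence may shift the sign $\epsilon$ when the type differs from that of $\bar{\ }$. For $\mathbb{R}$, an orthogonal involution keeps $\epsilon$ and a symplectic one swaps it. For $\mathbb{H}$, a symplectic involution on $M_k(\mathbb{H})$ corresponds to $(\mathbb{H},\bar{\ })$, while an orthogonal one corresponds to the $\epsilon$-swapped version. This also explains why no orthogonal involution on $M_k(\mathbb{H})$ appears in the table. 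Second, we compute the Witt groups of these base cases:
\begin{itemize}
\item $\W^0(\mathbb{R})=\Z$ (signature) and $\W^2(\mathbb{R})=0$, since skew forms are hyperbolic.
\item For $(\mathbb{C},\bar{\ })$: $\W^0=\W^2=\Z$ via the signature, because multiplying by $i$ identifies hermitian and skew-hermitian forms.
\item For $(\mathbb{H},\bar{\ })$: $\W^0=\Z$ by the signature of hermitian forms, and $\W^2=0$, since skew-hermitian forms over $\mathbb{H}$ are classified by rank and are hyperbolic in even rank. Anisotropic rank one forms such as $\langle i\rangle$ need a separate check, since $\langle i\rangle\perp\langle i\rangle\cong\langle i\rangle\perp\langle -i\rangle$ is hyperbolic. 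This makes the group $\Z_2$ or $0$. Classically $\W^{-1}(\mathbb{H})=0$, because any skew-hermitian form of rank at least two over $\mathbb{H}$ is isotropic and $\langle i\rangle\perp\langle j\rangle$ is hyperbolic. So the answer is $0$.
\end{itemize}

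Unitary involutions whose centre is $\mathbb{C}$ fall under the $(\mathbb{C},\bar{\ })$ case. For products $A\times A$ there are two possibilities. If the involution preserves the factors, the Witt group is the direct sum of the factor groups. If it swaps the factors (for example $\sigma_0$ on $\C_0^{0,4}=\mathbb{H}\times\mathbb{H}$ when the centre element $e_1\cdots e_4$ is fixed or negated), then the hermitian category is hyperbolic and the Witt group vanishes. Deciding which case occurs means computing $\sigma(z)$ for the central element $z=e_1\cdots e_n$ or its analogue: $\sigma_0(z)=(-1)^{n(n-1)/2}z$, and $\sigma_1^v$ introduces an extra sign $(-1)^{n-1}$ from conjugation by $v$.

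The main obstacle is this bookkeeping. For each $n\bmod 8$ and each of the four pairs (algebra, involution), one must track exactly which $\epsilon$ the Morita reduction produces, and on product algebras whether the involution swaps the factors. These signs determine whether an entry of Table \ref{table:sigma_combined} is $\Z$, $\Z\oplus\Z$, $\Z_2$-free, or $0$. We will settle the signs by writing explicit models: $\sigma_0$ on $\C^{0,n}_0$ via the isomorphism $\C_0^{0,n}\cong\C^{n-1,0}$ of the Lemma, $e_1e_i\mapsto f_i$, and track the involution through it. After that, 8-periodicity, which follows from the periodicity remark, extends the table to all $n$.
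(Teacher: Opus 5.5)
Your architecture matches the paper's: Morita reduction to $\R$, $\mathbb{C}$, $\mathbb{H}$; the observation that a factor-swapping involution on $A\times A$ gives zero; and vanishing of odd groups for semisimple algebras. For the last point the paper only cites a reference, and your split-derived-category argument is an acceptable substitute. The problem is the base values you feed in, and they change the table.

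\textbf{First error: $\W^2(\mathbb{H},\bar{\ })$ is $\Z_2$, not $0$.} You correctly note that every skew-hermitian form over $\mathbb{H}$ is $r\langle i\rangle$, and that $2\langle i\rangle\cong\langle i\rangle\perp\langle -i\rangle$ is hyperbolic. But a metabolic form over a division ring has even rank, since a Lagrangian has half the dimension. So rank mod $2$ is a Witt invariant, and the anisotropic rank-one form $\langle i\rangle$ is nonzero. Isotropy of forms of rank $\ge 2$ and hyperbolicity of $\langle i\rangle\perp\langle j\rangle$ only show $2[\langle i\rangle]=0$. Hence $\W^2(\mathbb{H},\bar{\ })\cong\Z_2$, which is the value the paper uses. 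With your value you would obtain:
\begin{enumerate}
\item[(a)] $\W^2(\C_0^{0,3},\sigma_0)=0$ instead of $\Z_2$. Here $\sigma_0$ negates $e_1e_2,e_1e_3,e_2e_3$, so it is quaternion conjugation.
\item[(b)] $\W^2(\C_0^{0,4},\sigma_0)=0$ instead of $\Z_2^2$.
\end{enumerate}
Relatedly, your claim that no orthogonal involution on $M_k(\mathbb{H})$ occurs is false. With $v=e_1$, $\sigma_1^v$ on $\C_0^{0,3}\cong\mathbb{H}$ fixes $1,e_1e_2,e_1e_3$, so it is orthogonal. The tabulated $\W^0(\C_0^{0,3},\sigma_1^v)=\Z_2$ is exactly $\W^2(\mathbb{H},\bar{\ })$.

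\textbf{Second gap: first-kind involutions on $M_k(\mathbb{C})$ are missing.} Your Morita step sends every involution on $M_k(\mathbb{C})$ to $(\mathbb{C},\bar{\ })$. That is only possible for involutions of the second kind. Involutions acting trivially on the centre $\mathbb{C}$ do occur:
\begin{enumerate}
\item[(a)] $\sigma_1^{e_1}$ on $\C_0^{0,2}\cong\mathbb{C}$ fixes $e_1e_2$, so it is the identity.
\item[(b)] $\sigma_0$ on $\C_0^{1,3}\cong M_2(\mathbb{C})$ fixes the central element $e_1e_2e_3e_4$, which squares to $-1$ there.
\end{enumerate}
These reduce to $(\mathbb{C},\id)$, with $\W^0=\W(\mathbb{C})=\Z_2$ and $\W^2=0$ in orthogonal type, swapped in symplectic type. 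This is the paper's base case $\W^i(\mathbb{C},\sigma_{ort})$, and it is absent from your list. With your ingredients no $\Z_2$ could appear anywhere, while the table has many.

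\textbf{Smaller points.} Your example of a swapping involution is misattributed. In $\C_0^{0,4}$ one has $\sigma_0(e_1e_2e_3e_4)=+e_1e_2e_3e_4$, so $\sigma_0$ preserves the factors, which is why the table shows $\Z^2$ and $\Z_2^2$. It is $\sigma_1^v$ that swaps them. Your values $\W^0=\W^2=\Z$ for $(\mathbb{C},\bar{\ })$ are correct and are what the table requires. The $\Z_2$ printed for $\W^2(\mathbb{C},\sigma_{uni})$ in the paper's proof appears to be a misprint.

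Once these two base cases are corrected and you carry out the sign bookkeeping you describe, your argument becomes the paper's.
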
	
	\begin{table}[H]
		\footnotesize \centering
		\renewcommand{\arraystretch}{1.3}
		\begin{tabular}{|l||c|c|c|c||c|c|c|c|}
			\hline
			\multirow{2}{*}{$n $} & \multicolumn{2}{c|}{$\W^i(\C_0^{0,n}, \sigma_0)$} & \multicolumn{2}{c||}{$\W^i(\C_0^{1,n-1}, \sigma_0)$} & \multicolumn{2}{c|}{$\W^i(\C_0^{0,n}, \sigma_1^v)$} & \multicolumn{2}{c|}{$\W^i(\C_0^{1,n-1}, \sigma_1^v)$} \\
			\cline{2-9}
			& $i = 0$ & $i = 2$ & $i = 0$ & $i = 2$ & $i = 0$ & $i = 2$ & $i = 0$ & $i = 2$ \\
			\hline \hline
			$1$ & $\Z$   & $0$        & $\Z$   & $0$    & $\Z$   & $0$    & $\Z$   & $0$        \\
			\hline
			$2$ & $\Z$   & $\Z$       & $0$    & $0$    & $\Z_2$ & $0$    & $\Z^2$ & $0$        \\
			\hline
			$3$ & $\Z$   & $\Z_2$     & $0$    & $\Z$   & $\Z_2$ & $\Z$   & $\Z$   & $0$        \\
			\hline
			$4$ & $\Z^2$ & $\Z_2^2$ & $0$    & $\Z_2$ & $0$    & $0$    & $\Z$   & $\Z$       \\
			\hline
			$5$ & $\Z$   & $\Z_2$     & $\Z$   & $\Z_2$ & $\Z$   & $\Z_2$ & $\Z$   & $\Z_2$     \\
			\hline
			$6$ & $\Z$   & $\Z$       & $0$    & $0$    & $0$    & $\Z_2$ & $\Z^2$ & $\Z_2^2$ \\
			\hline
			$7$ & $\Z$   & $0$        & $\Z_2$ & $\Z$   & $0$    & $\Z$   & $\Z$   & $\Z_2$     \\
			\hline
			$8$ & $\Z^2$ & $0$        & $\Z_2$ & $0$    & $0$    & $0$    & $\Z$   & $\Z$       \\
			\hline
		\end{tabular}
		\caption{Witt groups of Clifford algebras}
		\label{table:sigma_combined}
	\end{table}
	\begin{proof}
		If $i$ is odd, the result follows from \cite[Theorem 5.2]{DR87}. If $i$ is even, the results follow from Morita equivalence (cf. \cite[Section I.8]{Knus}), together with the following basic computations:
 	\[ \W^i(M_n(\mathbb{H}) \times M_n(\mathbb{H}), \sigma_{uni}) =	\W^i( M_n(\R) \times M_n(\R), \sigma_{uni}) = 0  \]
 	and
	\[ \W^i(\mathbb{H}, \sigma_{sym}) =	\begin{cases}
			\Z & \textnormal{if $i =0$} \\
			\Z_2 & \textnormal{if $i=2$}
		\end{cases} 
		\quad   \quad 
		\W^i(\mathbb{C}, \sigma_{uni}) =	\begin{cases}
		\Z & \textnormal{if $i =0$} \\
		\Z_2 & \textnormal{if $i=2$}
		\end{cases} \]
	\[ \W^{i}(\mathbb{C}, \sigma_{ort}) = \W^{i-2}(\mathbb{C},\sigma_{sym})	= \begin{cases}
	\Z_2 & \textnormal{if $i =0$} \\
    0	 & \textnormal{if $i=2$}
\end{cases}  \quad \quad 
\W^i(\mathbb{R}) = \begin{cases}
	\mathbb{Z} & \textrm{if $i = 0$} \\
	0  & \textnormal{if $i =2 $}
\end{cases}
\]
These computations were done in \cite[Section I.9]{Knus}.  
	\end{proof}
	
	\section{Witt groups of real projective quadrics}\label{sect:witt-quadric}
	This section is devoted to settle the computation of Witt groups of the projective quadrics $\mathbb{P}C^n$ and $\mathbb{P}S^n$ with coefficients in both trivial and twisted line bundles.
	\begin{theo}\label{thm:Psphere}
		The Witt groups $\W^i(\mathbb{P}S^n, \mathcal{O}(j))$ are settled in Table \ref{table:PSn_minus}. 
	\end{theo}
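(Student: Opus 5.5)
The plan is to compute $\W^i(\mathbb{P}S^n,\SO(j))$ with the author's results on Witt groups of split and non-split smooth projective quadrics \cite{X19,X26}. Those results express the Witt groups of a quadric through a semi-orthogonal (Kapranov-type) decomposition of its derived category. The quadric $\mathbb{P}S^n$ is the projective quadric of the form $\varphi = (n+1)\langle 1\rangle \perp \langle -1\rangle$, so $\varphi\cong\varphi_{1,n+1}$ up to sign. Its derived category decomposes into the exceptional line bundles $\SO,\SO(1),\ldots,\SO(n-1)$ together with one further piece: the category of modules over the even Clifford algebra $\C_0(\varphi)\cong \C_0^{1,n+1}$. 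This last piece is generated by the spinor bundle(s).

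The first step is to apply the additivity/d\'evissage statements of \cite{X19,X26}. These give a long exact sequence
\[ \cdots \to \W^{i}(\R)\ \text{or}\ 0 \to \W^i(\mathbb{P}S^n,\SO(j)) \to \W^{i-n+?}(\C_0^{1,n+1},\sigma) \to \cdots, \]
and in favourable parities they give an isomorphism or a split sum. The key bookkeeping is as follows. The exceptional line bundles pair up under the duality $\h(-,\SO(j))[n]$ and therefore cancel, except for at most one self-dual line bundle when the twist parity allows it. That surviving bundle contributes a copy of $\W^{i}(\R)$, which is $\Z$ in degree $0 \bmod 4$ and $0$ otherwise. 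The Clifford piece contributes $\W^{i'}(\C_0^{1,n+1},\sigma)$, where the involution $\sigma$ is $\sigma_0$ or $\sigma_1^v$ according to whether $j$ is even or odd. The degree $i'$ is $i$ shifted by an amount depending on $n$ and $j$. Both the choice of involution and the shift are read off from \cite{X26}.

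The second step is to plug in Theorem \ref{thm:Wittcliff}, replacing $n$ by $n+2$ in the column for $\C_0^{1,\,n+1}$. Periodicity mod $8$ in $n$ and mod $4$ in $i$ then fills in all $8\times 4\times 2$ entries of Table \ref{table:PSn_minus}. As a sanity check, I would compare the $L=\SO$ column with the computations of Dell'Ambrogio--Fasel \cite{DF08} after inverting $2$. I would also compare with the small cases $n=1,2$, where $\mathbb{P}S^1\cong\mathbb{P}^1$ and $\mathbb{P}S^2\cong \mathbb{P}^1\times\mathbb{P}^1$ (the form has Witt index $1$ over $\R$, $n=1$: conic with a point) and the answers are classical.

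The main obstacle is the connecting homomorphism in the cases where the exceptional summand and the Clifford summand both contribute in adjacent degrees. This happens exactly when $\W^i(\R)=\Z$ neighbours a nonzero $\W(\C_0^{1,n+1},\sigma)$, and the extension or boundary map could then be multiplication by $2$ or by $2^{\delta}$ rather than $0$. In those cases I would first show that the sequence splits, using the pushforward along the structure map $\mathbb{P}S^n\to\Spec\R$ or the rank/signature map at a real point as a retraction of the $\W(\R)$-summand. If no splitting is available, I would identify the connecting map through the explicit spinor forms $(\mathcal U^\pm,\theta^\pm)$, computing their ranks and signatures at a real point so that the image is pinned down.
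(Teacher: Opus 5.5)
Your overall route matches the paper's: quote \cite{X19,X26}, reduce to $\W^*(\C_0^{1,n+1},\sigma)$, then read off Theorem~\ref{thm:Wittcliff} with $n$ replaced by $n+2$. But the structure you attribute to those results is wrong, and taken literally your recipe produces a wrong table. You allow at most one line-bundle contribution, a single $\W^i(\R)$ coming from a self-dual bundle. There is a further term $\W^{i-n}(\R)$ whenever $L$ agrees with $\omega_{\mathbb{P}S^n}=\SO(-n)$ modulo squares, i.e.\ for $L=\SO$ with $n$ even and for $L=\SO(1)$ with $n$ odd. It is the target of the pushforward $p_*\colon \W^i(\mathbb{P}S^n,L)\to\W^{i-n}(\R)$ along $p\colon\mathbb{P}S^n\to\Spec\R$. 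The correct input is as follows.
\begin{itemize}
\item For $L=\SO$: the group $\W^i(\R)\oplus\W^{i+1}(\C_0^{1,n+1},\sigma_0)$ maps isomorphically onto $\W^i(\mathbb{P}S^n)$ when $n$ is odd. When $n$ is even it is the first term of a long exact sequence ending in $p_*$.
\item For $L=\SO(1)$: the group $\W^i(\C_0^{1,n+1},\sigma_1^v)$ maps isomorphically onto $\W^i(\mathbb{P}S^n,\SO(1))$ when $n$ is even. When $n$ is odd it is the first term of a long exact sequence ending in $p_*$.
\end{itemize}
So the Clifford shift is $+1$, resp.\ $0$; the shift by $n$ belongs to the $\W(\R)$-term. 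No choice of your unspecified shift repairs the omission, because the omission changes the number of $\Z$-summands:
\begin{itemize}
\item For $n=8$, $L=\SO$, your count gives a single $\Z$, since all $\sigma_0$-groups of $\C_0^{1,9}$ vanish. In fact $\W^0(\mathbb{P}S^8)=\Z^2$.
\item For $n=2$, $L=\SO$, you lose the $\Z$ in $\W^2(\mathbb{P}S^2)$.
\item For $n=1$, $L=\SO(1)$, you lose $\W^1=\Z$. This is visible directly, since $\mathbb{P}S^1\cong\mathbb{P}^1$ and $\SO(1)$ restricts to $\SO_{\mathbb{P}^1}(2)$.
\end{itemize}

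The step that actually closes the argument, and which your proposal does not isolate, is that $p_*$ is split surjective. Since $\mathbb{P}S^n$ is isotropic, it has a real point $x\colon\Spec\R\to\mathbb{P}S^n$, and $x_*\colon\W^{i-n}(\R)\to\W^i(\mathbb{P}S^n,\omega_{\mathbb{P}S^n})$ satisfies $p_*x_*=\id$. Therefore all connecting maps vanish and the sequences split. This gives $\W^i(\mathbb{P}S^n)\cong\W^i(\R)\oplus\W^{i+1}(\C_0^{1,n+1},\sigma_0)\oplus\W^{i-n}(\R)$ for $n$ even, and $\W^i(\mathbb{P}S^n,\SO(1))\cong\W^i(\C_0^{1,n+1},\sigma_1^v)\oplus\W^{i-n}(\R)$ for $n$ odd. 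After that the table is bookkeeping.

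You do mention the pushforward and a real point, but in the wrong role. The map $p_*$ is not a retraction of the $\W^i(\R)$-summand: it shifts degree by $n$ and exists only for the twist $\omega_{\mathbb{P}S^n}$. A retraction of $p^*$ is $x^*$. Conversely, the obstacle you anticipate does not arise: a boundary map between the $\W^i(\R)$-summand and the Clifford summand, acting as multiplication by $2$ or $2^{\delta}$. That sum is already direct in \cite[Proposition 8.5]{X19}, so no signature computation with spinor forms is needed.

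Finally, your $n=2$ check is off. The form $\langle 1,1,1,-1\rangle$ has non-trivial discriminant, consistent with $\C_0^{1,3}\cong M_2(\mathbb{C})$. So $\mathbb{P}S^2$ is the Weil restriction of $\mathbb{P}^1_{\mathbb{C}}$, whose real locus is a $2$-sphere, not $\mathbb{P}^1\times\mathbb{P}^1$.
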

\begin{table}[H]
	\footnotesize \centering
	\renewcommand{\arraystretch}{1.5}
	\begin{tabular}{|l||c|c|c|c||c|c|c|c|}
		\hline
		\multirow{2}{*}{$\bar{n}$} & \multicolumn{4}{c||}{$\W^i(\mathbb{P}S^n)$} & \multicolumn{4}{c|}{$\W^i(\mathbb{P}S^n, \SO(1))$} \\
		\cline{2-9}
		& $i = 0$ & $i = 1$ & $i = 2$ & $i = 3$ & $i = 0$ & $i = 1$ & $i = 2$ & $i = 3$ \\
		\hline \hline
		$1$ & $\Z$ & $\Z$ & $0$ & $0$ & $\Z$ & $\Z$ & $0$ & $0$ \\
		\hline
		$2$ & $\Z$ & $\Z_2$ & $\Z$ & $0$ & $\Z$ & $0$ & $\Z$ & $0$ \\
		\hline
		$3$ & $\Z$ & $\Z_2$ & $0$ & $\Z$ & $\Z$ & $0$ & $\Z_2$ & $\Z$ \\
		\hline
		$4$ & $\Z^2$ & $0$ & $0$ & $0$ & $\Z^2$ & $0$ & $\Z_2^2$ & $0$ \\
		\hline
		$5$ & $\Z$ & $\Z$ & $0$ & $\Z_2$ & $\Z$ & $\Z$ & $\Z_2$ & $0$ \\
		\hline
		$6$ & $\Z$ & $0$ & $\Z$ & $\Z_2$ & $\Z$ & $0$ & $\Z$ & $0$ \\
		\hline
		$7$ & $\Z$ & $0$ & $0$ & $\Z$ & $\Z$ & $0$ & $0$ & $\Z$ \\
		\hline
		$8$ & $\Z^2$ & $0$ & $0$ & $0$ & $\Z^2$ & $0$ & $0$ & $0$ \\
		\hline
	\end{tabular}
	\caption{Witt groups of the quadric $\mathbb{P}S^n$}
	\label{table:PSn_minus}
\end{table}

\begin{proof}
	Since the projective quadric $\mathbb{P}S^n$ is isotropic, one can use \cite[Proposition 8.5]{X19} for the untwisted case $L = \SO$. To be more precise, if $n$ is odd, then there is an isomorphism $\W^i(\mathbb{P}S^{n}) \cong \W^i(\mathbb{R}) \oplus \W^{i+1}(\C_0^{1,n+1},\sigma_0)$.
	If $n$ is even, then we have a long exact sequence 						\[\xymatrix{ \ldots \ar[r] & \W^{i}(\mathbb{R}) \oplus \W^{i+1}(\C_0^{1,n+1},\sigma_0) \ar[r] & \W^i(\mathbb{P}S^n) \ar[r]^-{p_*} & \W^{i-n}(\mathbb{R}) \ar[r] & \ldots }\]
	For the twisted case $L= \SO(1)$, we shall use \cite{X26}. If $n$ is even, then we have an isomorphism $\W^i(\mathbb{P}S^n,\SO(1)) \cong \W^i(\C_0^{1,n+1},\sigma_1^v)$.
	 If $n$ is odd, then there is a long exact sequence
	\[\xymatrix{ \ldots  \ar[r] & \W^i(\C_0^{1,n+1},\sigma_1^v) \ar[r] &  \W^i(\mathbb{P}S^n,\SO(1))  \ar[r]^-{p_*} & \W^{i-n}(\mathbb{R}) \ar[r] & \ldots }\]
	Note that since $\mathbb{P}S^n$ is isotropic and it has a rational point, and therefore $p_*$ is split surjective. The table follows straightforward from Theorem \ref{thm:Wittcliff}. 
\end{proof}

\begin{theo}\label{thm:Pcone}
The Witt groups $\W^i(\mathbb{P}C^{n-1}, \mathcal{O}(j))$ are settled in Table \ref{table:PCn}.

\end{theo}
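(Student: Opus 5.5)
The plan is to treat Theorem \ref{thm:Pcone} exactly as Theorem \ref{thm:Psphere} was treated, feeding the Clifford algebra computations of Theorem \ref{thm:Wittcliff} into the structure theorems of \cite{X19} and \cite{X26}. The new feature is that $\mathbb{P}C^{n-1}$ is the projective quadric defined by the form $n\langle 1\rangle$ in $n$ variables, and this form is \emph{anisotropic} over $\mathbb{R}$. The relevant Clifford algebras will therefore be $\C_0^{0,n}$ rather than $\C_0^{1,n-1}$. Also, $\mathbb{P}C^{n-1}$ has no rational point, so the pushforward $p_*$ to $\W(\mathbb{R})$ can no longer be split by a point, and this is where the work will lie.

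\emph{Twisted case.} For $L=\SO(1)$ I will use \cite{X26}, which should give the following:
\begin{itemize}[label={}]
\item If $n-1$ is even, then $\W^i(\mathbb{P}C^{n-1},\SO(1))\cong \W^i(\C_0^{0,n},\sigma_1^v)$, read off directly from Theorem \ref{thm:Wittcliff}.
\item If $n-1$ is odd, there is a long exact sequence
\[\cdots\to \W^i(\C_0^{0,n},\sigma_1^v)\to \W^i(\mathbb{P}C^{n-1},\SO(1))\xrightarrow{p_*} \W^{i-n+1}(\mathbb{R})\to \W^{i+1}(\C_0^{0,n},\sigma_1^v)\to\cdots\]
\end{itemize}

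\emph{Untwisted case.} For $L=\SO$ I will use \cite[Proposition 8.5]{X19} in its anisotropic form. This should give $\W^i(\mathbb{R})\oplus \W^{i+1}(\C_0^{0,n},\sigma_0)$ when $n-1$ is odd, and a long exact sequence ending in $p_*:\W^i(\mathbb{P}C^{n-1})\to \W^{i-n+1}(\mathbb{R})$ when $n-1$ is even. One point needs checking here. For an anisotropic quadric the "$\W^i(\mathbb{R})$" summand may be replaced by the Witt group of the base modulo the ideal generated by the quadric's form. Since $\W(\mathbb{R})\cong\Z$ and $n\langle1\rangle$ generates the ideal $n\Z$, this is the source of the finite cyclic groups $\Z/2^{\delta(n)}$ that the outline anticipates, e.g.\ $\W(\mathbb{P}C^{4m})\cong\Z/2^{\delta(4m+1)}$. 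Concretely, I expect $\W^0(\mathbb{P}C^{n-1})$ to be a quotient of $\Z$ by the subgroup generated by the image of a boundary map. That image is the class of the "spinor" form, namely $2^{\delta(n)}\langle1\rangle$, where $2^{\delta(n)}$ is the real dimension of an irreducible $\C^{0,n}$-module.

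\emph{Main obstacle.} In the cases with a long exact sequence, the main difficulty is to determine the connecting maps $\W^{i-n+1}(\mathbb{R})\to\W^{i+1}(\C_0^{0,n},\sigma)$ and the image of $p_*$. I would handle this in two steps.
\begin{itemize}[label={}]
\item \emph{Image of $p_*$.} Every closed point of $\mathbb{P}C^{n-1}$ has residue field $\mathbb{C}$, so $p_*$ factors through transfers from $\mathbb{C}$. This should force its image to be $2\Z$, or to vanish on the $\Z$-summands in the relevant degrees. The trace form of $\mathbb{C}/\mathbb{R}$ is $\langle 2,-2\rangle$, which is hyperbolic, so I expect $p_*$ to be zero in degree $0$ when $n-1\equiv 0 \bmod 4$.
\item \emph{Connecting map.} Exactness then gives the connecting map injective on the $\Z$-summand. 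Its image is the class of the irreducible Clifford module, so the cokernel is computed from the rank comparison $\Z\to\Z$, multiplication by $2^{\delta(n)}$ up to a unit.
\end{itemize}

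In the remaining entries the neighbouring Clifford Witt groups are $0$ or $\Z_2$, and the extensions are forced by torsion-freeness, or by comparison with the known odd-degree vanishing from \cite{DR87}. I would fill the table row by row for $\bar n=1,\dots,8$ and double-check each row against the rank identity obtained by inverting $2$, where $\W(\mathbb{P}C^{n-1})[\tfrac12]=0$ because $(\mathbb{P}C^{n-1})_\R=\emptyset$.
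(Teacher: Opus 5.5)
Your twisted-column strategy (input from \cite{X26} fed with Theorem \ref{thm:Wittcliff}) has the same overall shape as the paper's, but as written the plan does not go through.

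\textbf{Indexing and the untwisted column.} The indexing is off by one. $\mathbb{P}C^{n-1}\subset\mathbb{P}^{n-1}$ is cut out by $n\langle1\rangle$, so it has dimension $n-2$ and $\omega\cong\SO(2-n)$. Your parities and degree shifts treat it as $(n-1)$-dimensional. The correct input is an isomorphism $\W^i(\mathbb{P}C^{n-1},\SO(1))\cong\W^i(\C_0^{0,n},\sigma_1^v)$ for $n$ \emph{even}, and for $n$ \emph{odd} a long exact sequence with $p_*$ landing in $\W^{i-n+2}(\mathbb{R})$.

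Your version produces impossible groups:
\begin{itemize}
\item For $n=3$ it gives $\W^2(\mathbb{P}C^2,\SO(1))\cong\W^2(\C_0^{0,3},\sigma_1^v)\cong\Z$.
\item For $n=4$, since $\W^3$ and $\W^0$ of $(\C_0^{0,4},\sigma_1^v)$ vanish, it gives $\W^3(\mathbb{P}C^3,\SO(1))\cong\W^0(\mathbb{R})\cong\Z$. Here the connecting map you want to be injective even has target $0$.
\end{itemize}
Both are impossible, because these schemes have no real points and so their Witt groups are $2$-primary torsion. The same shift error runs through your untwisted discussion.

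Your mechanism for the untwisted column also does not produce it. $\W(\mathbb{R})$ modulo the ideal generated by $n\langle1\rangle$ is $\Z/n$ (e.g.\ $\Z/3$ for the conic $\mathbb{P}C^2$), which is not even a $2$-group. The order $2^{\delta(n)}$, like the remaining entries of that column, is only asserted. The paper does not rederive this column: it is \cite[Theorem 1.5]{X19} applied to $Q_{0,n}=\mathbb{P}C^{n-1}$. You need that input or an actual proof.

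\textbf{The missing engine.} The decisive fact is the one you keep only as a final sanity check: all $\W^i(\mathbb{P}C^{n-1},\SO(j))$ are $2$-primary torsion \cite{DF08}.
\begin{itemize}
\item In each twisted sequence for $n$ odd, this kills $p_*\colon\W^i(\mathbb{P}C^{n-1},\SO(1))\to\W^{i-n+2}(\mathbb{R})\cong\Z$ outright. No transfer argument through closed points is needed. Your hedge that the image might be $2\Z$ would in fact destroy the injectivity you need next.
\item Hence the following map $\Z\to\W^{i+1}(\C_0^{0,n},\sigma_1^v)$ is injective. By Table \ref{table:sigma_combined} its target is $\Z$ in every such degree.
\item The cokernel is therefore a finite cyclic $2$-group, recorded as $\Z_{2^{\kappa(n)}}$. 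This is all the theorem claims; $\kappa(n)$ is deliberately left undetermined.
\end{itemize}

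Your assertion that this map is multiplication by $2^{\delta(n)}$ is unsupported, and it is false for $n=3$. There $\mathbb{P}C^2$ is a smooth curve, so $\W^2(\mathbb{P}C^2,\SO(1))=0$ by the Gersten--Witt spectral sequence. The map $\Z\to\Z$ in the sequence $0\to\W^1(\mathbb{P}C^2,\SO(1))\to\Z\to\Z\to\W^2(\mathbb{P}C^2,\SO(1))\to0$ is therefore an isomorphism, not multiplication by $2^{\delta(3)}>1$.
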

\begin{table}[H]
	\footnotesize \centering
	\renewcommand{\arraystretch}{1.5}
	\begin{tabular}{|l||c|c|c|c||c|c|c|c|}
		\hline
		\multirow{2}{*}{$\bar{n}$} & \multicolumn{4}{c||}{$\W^i(\mathbb{P}C^{n-1})$} & \multicolumn{4}{c|}{$\W^i(\mathbb{P}C^{n-1}, \SO(1))$} \\
		\cline{2-9}
		& $i = 0$ & $i = 1$ & $i = 2$ & $i = 3$ & $i = 0$ & $i = 1$ & $i = 2$ & $i = 3$ \\
		\hline \hline
		$1$ & $\Z_{2^{\delta(n)}}$ & $0$ & $0$ & $0$ & $\Z_{2^{\kappa(n)}}$ & $0$ & $0$ & $0$ \\
		\hline
		$2$ & $\Z_{2^{\delta(n)}}$ & $\Z_{2^{\kappa(n)}}$ & $0$ & $0$ & $\Z_2$ & $0$ & $0$ & $0$ \\
		\hline
		$3$ & $\Z_{2^{\delta(n)}}$ & $\Z_2$ & $0$ & $0$ & $\Z_2$ & $0$ & $\Z_{2^{\kappa(n)}}$ & $0$ \\
		\hline
		$4$ & $\Z_{2^{\delta(n)}}$ & $\Z_2^2$ & $0$ & $\Z_{2^{\kappa(n)}}$ & $0$ & $0$ & $0$ & $0$ \\
		\hline
		$5$ & $\Z_{2^{\delta(n)}}$ & $\Z_2$ & $0$ & $0$ & $\Z_{2^{\kappa(n)}}$ & $0$ & $\Z_2$ & $0$ \\
		\hline
		$6$ & $\Z_{2^{\delta(n)}}$ & $\Z_{2^{\kappa(n)}}$ & $0$ & $0$ & $0$ & $0$ & $\Z_2$ & $0$ \\
		\hline
		$7$ & $\Z_{2^{\delta(n)}}$ & $0$ & $0$ & $0$ & $0$ & $0$ & $\Z_{2^{\kappa(n)}}$ & $0$ \\
		\hline
		$8$ & $\Z_{2^{\delta(n)}}$ & $0$ & $0$ & $\Z_{2^{\kappa(n)}}$ & $0$ & $0$ & $0$ & $0$ \\
		\hline
	\end{tabular}
	\caption{Witt groups of the quadric $\mathbb{P}C^{n-1}$}
	\label{table:PCn}
\end{table}
Here, the number $\kappa (n)$ is an integer depends on $n$.
\begin{proof}
	The untwisted case $\W^i(\mathbb{P}C^{n-1})$ is proved in \cite[Theorem 1.5]{X19} without the determination of the number $\kappa(n)$. Note that our new notation $\mathbb{P}C^{n-1}$ coincides with the scheme $Q_{0,n}$ in \cite{X19}. The twisted case shall follow from \cite{X26}. If $n$ is even,  we have an isomorphism $\W^i(\mathbb{P}C^{n-1},\SO(1)) \cong \W^i(\C_0^{0,n},\sigma_1^v).$ 	If $n$ is odd, then we use the exact sequence
	\[\xymatrix{ \ldots \ar[r] & \W^i(\C_0^{0,n},\sigma_1^v) \ar[r] &  \W^i(\mathbb{P}C^{n-1},\SO(1))  \ar[r]^-{p_*} & \W^{i-n+2}(\mathbb{R}) \ar[r] & \ldots }\]
Another key input is that the Witt groups $\W^i(\mathbb{P}C^n, \SO(j))$ are all two primary torsion \cite{DF08}. These data suffices to obtain the result by Theorem \ref{thm:Wittcliff}. To explain, if $n =3$, we obtain an exact sequence
\[ \xymatrix{0 \ar[r] & \W^1(\mathbb{P}C^2, \SO(1)) \ar[r] & \Z \ar[r] & \Z \ar[r] & \W^2(\mathbb{P}C^2,\SO(1)) \ar[r] & 0 }\]
Since $\W^1(\mathbb{P}C^2, \SO(1)) $ is two primary torsion, this exact sequence  forces the group $\W^1(\mathbb{P}C^2, \SO(1))$ vanishes.  The other situation is similar whenever the number $\kappa(n)$ shows up in the table. 
\end{proof}

	\section{Witt groups of the real sphere}\label{sec:Wittsphere}	
	\begin{theo}\label{thm:Wittsphere}
		The group structure of $\W^i(S^n)$ is settled in Table \ref{table:S_n}.  
		\begin{table}[htbp]
			\Small 
			\centering
			\renewcommand{\arraystretch}{1.5}
			\begin{tabular}{|c||c|c|c|c|c|c|c|c|c|c|}
				\hline
				$\bar{n}$ & $1$ & $2$ & $3$ & $4$ & $5$ & $6$ & $7$ & $8$ \\
				\hline  \hline
				$\W^0(S^n)$ & $\mathbb{Z} \oplus \mathbb{Z}_2$ & $\mathbb{Z} \oplus \mathbb{Z}_2$ & $ \quad \mathbb{Z} \quad $  & $\mathbb{Z} \oplus \mathbb{Z} $ & $ \quad \mathbb{Z} \quad $ & $ \quad \mathbb{Z} \quad $ & $ \quad \mathbb{Z} \quad $ & $\mathbb{Z} \oplus \mathbb{Z}$ \\
					\hline 
				$W^1(S^{{n}})$ & $\Z$ & $\Z_{2}$ & $\Z_{2}$ & $0$ & $\Z$ & $0$ & $0$ & $0$  \\
					\hline 
				$W^2(S^{{n}})$ & $0$ & $\Z$ & $0$ & $\Z_{2}$ & $0$ & $\Z$ & $0$ & $0$  \\
					\hline 
				$W^3(S^{{n}})$ & $0$ & $0$ & $\Z$ & $0$ & $0$ & $0$ & $\Z$ & $0$  \\
				\hline
			\end{tabular}
			\caption{Witt groups of the real sphere}
			\label{table:S_n}
		\end{table}
	\end{theo}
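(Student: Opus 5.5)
The proof will run the 12-term localization sequence \eqref{eq:balmer-12-term-exact} twice, once with $L=\SO$ and once with $L=\SO(1)$. We feed in the groups $\W^i(\mathbb{P}S^n,L)$ from Theorem \ref{thm:Psphere} and $\W^i(\mathbb{P}C^n,L(1))$ from Theorem \ref{thm:Pcone}, with the index shifted: $\mathbb{P}C^n$ is the case $n+1$ of Table \ref{table:PCn}, and $L(1)$ swaps the twisted and untwisted columns. Two structural facts do most of the work.

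First, every $\W^i(\mathbb{P}C^n,L(1))$ is $2$-primary torsion (as recalled from \cite{DF08}). Consequently $j^*\otimes\mathbb{Z}[\frac12]$ is an isomorphism, and this fixes the rank of $\W^i(S^n)$: it equals the rank of $\W^i(\mathbb{P}S^n,L)$ for either choice of $L$.

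Second, $S^n$ has a rational point, and the unit form $\langle 1\rangle$ of $\W^0(S^n)$ lifts to $\mathbb{P}S^n$. Pulling back to the point shows that $\Z\langle1\rangle$ is a direct summand of $\W^0(S^n)$. More generally, any $\Z$ summand in the image of $j^*$ splits off by restricting to a point, or by comparing with the torsion-free $\W^i(\mathbb{P}S^n,L)$.

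For each residue $\bar n$ and each $i$, we then read off an exact segment
\[\W^{i-1}(\mathbb{P}C^n,L(1))\xrightarrow{i_*}\W^i(\mathbb{P}S^n,L)\xrightarrow{j^*}\W^i(S^n)\xrightarrow{\partial_L}\W^i(\mathbb{P}C^n,L(1))\xrightarrow{i_*}\W^{i+1}(\mathbb{P}S^n,L).\]
The strategy is to choose, case by case, whichever $L$ makes the flanking torsion groups vanish, or makes $i_*$ forced. In many cases, for example $\W^2(S^n)$ with $\bar n=7$, one choice of $L$ has $\W^{i-1}$ and $\W^{i}$ of $\mathbb{P}C^n$ both zero, so $j^*$ is an isomorphism onto a group already listed in Theorem \ref{thm:Psphere}.

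When a torsion group appears, we use two tools. The map $i_*$ from a finite group into a torsion-free $\W^{i+1}(\mathbb{P}S^n,L)$ is zero. And $i_*$ into $\Z$ is zero, so a torsion $\W^{i}(\mathbb{P}C^n,L(1))$ injects via $\partial$ only if its successor in the sequence is forced to vanish by the other twist. Running both twists and cross-checking ranks should pin down every entry of Table \ref{table:S_n} except $\W^0(S^{4m})$. For the $\Z_2$ entries, such as $\W^0(S^n)$ with $\bar n=1,2$, the extension $0\to\Z\to\W^0(S^n)\to\Z_2\to0$ has to be shown to split. I would do this with the rational-point splitting of $\langle1\rangle$, or with the signature at a point together with Proposition \ref{prop:brumfiel-surj}, which gives $\Z\oplus\Z_2$ as a quotient.

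The main obstacle is $\W^0(S^{4m})$. With $L=\SO(1)$, the flanking groups $\W^3(\mathbb{P}C^{4m},\SO(2))$ and $\W^1(\mathbb{P}S^{4m},\SO(1))$ vanish by the tables. This gives the short exact sequence \eqref{eq:short-exact-WS4} with outer terms $\Z^2$ and $\Z_{2^{\delta(4m+1)}}$, which does not determine the middle group. To resolve it I would follow the outline. First, realize generators of $\W(\mathbb{P}S^{4m},\SO(1))\cong\W^0(\C_0^{1,4m+1},\sigma_1^v)\cong\Z^2$ by the forms $(\mathcal U^\pm,\theta^\pm)$ on the two spinor bundles, via the identification in \cite{X26}. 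Second, prove Lemma \ref{lem:spinor-decompose}, that $j^*\mathcal U^+\perp j^*\mathcal U^-\cong 2^{\delta(4m+1)}\langle1\rangle$.

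Granting this, write $u=j^*\mathcal U^+$ and $w=j^*\mathcal U^-$. The element $\langle1\rangle$ maps to a generator of $\W(\mathbb{P}C^{4m})$, since the trivial form on $\mathbb{P}C^{4m}$ generates and $\partial_{\SO(1)}$ is surjective. Moreover $2^{\delta}\langle1\rangle=u+w$ lies in $j^*(\Z^2)$. So $\W^0(S^{4m})$ is generated by $\langle1\rangle$ and $u$, and the relation $2^\delta\langle1\rangle=u+w$ expresses $w$ in terms of them. Hence $\W^0(S^{4m})$ is torsion free of rank $2$, that is, $\Z\oplus\Z$.
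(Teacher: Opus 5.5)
Your route is the paper's. You run the two localization sequences for $L=\SO$ and $L=\SO(1)$ against Theorems \ref{thm:Psphere} and \ref{thm:Pcone}, use the $\langle 1\rangle$-splitting at a rational point, and settle $n=4m$ via \eqref{eq:short-exact-WS4} with the spinor forms $\UC^\pm$ and Lemma \ref{lem:spinor-decompose}. Your final paragraph also spells out the extension argument the paper compresses into ``the result follows''. However, one step there is not justified as written.

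You claim $\partial_{\SO(1)}\langle 1\rangle$ generates $\W(\mathbb{P}C^{4m})\cong\Z_{2^{\delta}}$ ``since the trivial form on $\mathbb{P}C^{4m}$ generates and $\partial_{\SO(1)}$ is surjective''. Surjectivity of $\partial_{\SO(1)}$ says nothing about which element hits a generator. To make this reasoning work you would have to compute $\partial_{\SO(1)}\langle 1\rangle$ explicitly, namely the boundary of $y\colon\SO\to\SO(1)$. You would also need to know that the resulting form generates $\W(\mathbb{P}C^{4m})$, and neither fact is among your inputs.

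You do not need either, because the relation forces it. Suppose $\partial_{\SO(1)}\langle 1\rangle$ had order $2^k$ with $k<\delta$. Exactness gives $2^k\langle 1\rangle=\alpha u+\beta w$ with $\alpha,\beta\in\Z$. Multiply by $2^{\delta-k}$ and compare with $2^{\delta}\langle 1\rangle=u+w$ inside $j^*\W(\mathbb{P}S^{4m},\SO(1))$. That group is free on $u,w$, since $j^*$ is injective and the $\UC^\pm$ form a basis. So $2^{\delta-k}\alpha=1$, a contradiction. With this line inserted, the rest of your argument (generation by $\langle 1\rangle$ and $u$, rank $2$, hence $\Z\oplus\Z$) is complete.

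Separately, drop the blanket claim that ``any $\Z$ summand in the image of $j^*$ splits off''. Restriction to a point only detects $\W^0$, and $S^{4m}$ itself is a counterexample: there $j^*(\Z^2)$ has index $2^{\delta}$ in $\W^0(S^{4m})$, so it is not a direct summand. None of the other table entries need this claim. The cross-check between the two twists suffices, together with the $\langle 1\rangle$-splitting for $\bar n=1,2$.
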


	\begin{proof}
Note that $W^i(S^n)$ appears in the localization sequence \eqref{eq:balmer-12-term-exact} in both the untwisted case $L=\SO$ and the twisted case $L=\SO(1)$. By comparing these two parallel localization sequences and applying Theorems \ref{thm:Psphere} and \ref{thm:Pcone}, Table \ref{table:S_n} follows immediately except in the case $i=0$ and $n \equiv 0 \mod 4$. 

Now assume that $n=4m$. Then, Theorems \ref{thm:Psphere} and \ref{thm:Pcone} yield the exact sequence \eqref{eq:short-exact-WS4} without difficulty. It remains to resolve the extension problem arising from \eqref{eq:short-exact-WS4}. Note that the symmetric form $(\UC_a, \theta)$ on the spinor bundle (cf. \cite[Prop. 4.7]{X26}) over $\mathbb{P}S^{n}$ with coefficients in $\SO(1)$ induces an isomorphism 
\[ (\UC_a, \theta)\otimes - : W\big(\C_0^{1,n+1},\sigma_1^v\big) \to W(\mathbb{P}S^n,\SO(1))  \]
by \cite[Proposition 5.9]{X26}. By an appropriate choice of $v$, we have an isomorphism $(\C^{1,n+1}_0,\sigma_1^v) \cong (\C^{0,n+1},\sigma_0) $
of algebras with involution. Therefore, we may concentrate on the Clifford algebra $\C^{0,n+1}$. 

Now, we describe the generators of $W(\mathbb{P}S^n,\SO(1))$.\ If $n \equiv 0 \mod 4$, the element $w := e_0 e_1 \ldots  e_n$ with $w^2 =1$ is in the center $Z(\C^{0,n+1})$, which is isomorphic to $\R \times \R$. Define $\hbar_\pm: = \frac{1\pm w}{2} \in Z(\C^{0,n+1})$, and it is not hard to see that $\hbar_\pm^2 = \hbar_\pm$, $\hbar_+ + \hbar_- = 1$ and $\hbar_+ \hbar_- = 0$. Therefore, 
\[ \C^{0,n+1} \cong \C^{0,n+1} \hbar_+ \oplus  \C^{0,n+1} \hbar_- \]
with $\C^{0,n+1} \hbar_\pm$ two-sided ideals of $\C^{0,n+1}$. Note that $\C^{0,n+1}\hbar_\pm$ are both isomorphic to $\C^{n,0}$ which is a simple algebra. 
Consider the algebra automorphism $\tau:\C^{0,n+1}\to \C^{0,n+1}$ with $ \tau(e_i)=-e_i$, with $0\le i\le n$.
Since $n+1$ is odd, we have $\tau(w)=-w$, hence $\tau(\hbar_+)=\hbar_-$.
Choose a primitive idempotent $\wp_+\in \C^{0,n+1}\hbar_+$ and set $\wp_-:=\tau(\wp_+)\in \C^{0,n+1}\hbar_-.$ Then $\wp_-$ is a primitive idempotent in $\C^{0,n+1}\hbar_-$. 

Under the isomorphism $(\C^{1,n+1}_0,\sigma_1^v) \cong (\C^{0,n+1},\sigma_0) $, tensoring the symmetric bundle $(\UC_a,\theta)$ in \cite[Prop 4.7]{X26} with the two irreducible left $\C^{0,n+1}$-ideals $E_\pm:= \C^{0,n+1} \wp_\pm$ equipped with the symmetric forms induced by the trace forms, yields two parallel symmetric bundles $(\UC^{\pm}, \theta^\pm)$, which represent the two generators of $W(\mathbb{P}S^n,\SO(1))$. It is straightforward to check that 
\[j^*(\UC^{\pm}, \theta^\pm) \cong (P_n^\pm,\hat\beta|_{P_n^\pm}) \]
where  we define 
\[P^\pm_n:= \ker(\Phi: A_n \otimes_{\mathbb{R}} E_\pm \to A_n \otimes_{\mathbb{R}} E_\pm ). \]

\begin{lem}\label{lem:spinor-decompose}
The orthogonal sum
\[(P_n^+,\hat\beta|_{P_n^+})\perp (P_n^-,\hat\beta|_{P_n^-})\]
is isometric to the trivial form $2^{\delta(n+1)} \langle 1 \rangle$ over $S^n$.
\end{lem}

\begin{proof}
Set $M_\pm:=A_n\otimes_{\mathbb R}E_\pm.$
Since $\tau(\wp_+)=\wp_-$, the map
\[ T:M_-\longrightarrow M_+,
\quad
T(a\otimes u):=a\otimes \tau(u), \]
is an $A_n$-module isomorphism. We first check that $T$ is an isometry if we input the form $\hat\beta$ over both sides of $T$. Note that we have
$\tr(\tau(z))=\tr(z)$
for all  $z\in \C^{0,n+1}$.
Since $\tau$ commutes with the canonical involution $\sigma$, we have
\[\beta(\tau(u),\tau(v))
= \tr \bigl(\sigma(\tau(u))\,\tau(v)\bigr)
= \tr \bigl(\tau(\sigma(u)v)\bigr)
= \tr(\sigma(u)v)
= \beta(u,v).\]
for $u,v\in E_+$. Therefore, $T$ is an isometry.

Furthermore, since $\tau(e_i)=-e_i$ for all $i$, we have
\[T(\mathbf{x}\cdot (a\otimes u))
=-\mathbf{x}\cdot T(a\otimes u).\]
Thus, $ T\circ \Phi=(1-\Phi)\circ T.$
Consequently,
$T(\Ker(\Phi))=\Ker(1-\Phi),$
that is,
\[T(P_n^-)=Q_n^+:=\Ker(1-\Phi:M_+\to M_+) \]
which yields an isometry
\[(P_n^-,\hat\beta|_{P_n^-})\cong (Q_n^+,\hat\beta|_{Q_n^+}).\]
The proof of Lemma \ref{lem:Pntheta} applied to $(M_+, \hat\beta)$ gives that
\[(P_n^+,\hat\beta|_{P_n^+})\perp (P_n^-,\hat\beta|_{P_n^-})
\cong (P_n^+,\hat\beta|_{P_n^+})\perp (Q_n^+,\hat\beta|_{Q_n^+})
\cong (M_+,\hat\beta).\]
Finally, $\beta$ is positive definite on $E_+$ which must be the trivial form. If follows that
$(M_+,\hat\beta)$
is the trivial form of rank $\dim(E_+)=2^{\delta(n+1)}$.
This proves the lemma.
\end{proof}
The result that $W(S^n) \cong \Z \oplus \Z$ for $n \equiv 0 \mod 4$ follows.  \end{proof}

	\begin{ackno}I would like to thank Marco Schlichting for his encouragement and helpful discussions. I also want to thank Baptiste Calm\`{e}s,
		Jean Fasel, 
		Jens Hornbostel, 
		Max Karoubi 
		and Chuck Weibel
		for useful comments.\ 
	\end{ackno}

\end{document}